\numberwithin{equation}{section}
\newtheorem{theorem}[equation]{Theorem}
\newtheorem*{theorem*}{Theorem} \newtheorem{lemma}[equation]{Lemma}
\newtheorem*{conjecture*}{Mamma Conjecture}
\newtheorem*{conjecture1*}{Mamma Conjecture (revisited)}
\newtheorem{proposition}[equation]{Proposition}
\newtheorem{corollary}[equation]{Corollary}
\newtheorem*{corollary*}{Corollary}
\theoremstyle{remark}
\newtheorem{example}[equation]{Example}
\newtheorem{notation}[equation]{Notation}
\theoremstyle{remark}
\newtheorem{remark}[equation]{Remark}
\newcommand{\cA}{{\mathcal A}}
\newcommand{\cB}{{\mathcal B}}
\newcommand{\cC}{{\mathcal C}}
\newcommand{\cD}{{\mathcal D}}
\newcommand{\cN}{{\mathcal N}}
\newcommand{\cO}{{\mathcal O}}
\newcommand{\cP}{{\mathcal P}}
\newcommand{\cT}{{\mathcal T}}
\newcommand{\bbC}{\mathbb{C}}
\newcommand{\bbF}{\mathbb{F}}
\newcommand{\bbG}{\mathbb{G}}
\newcommand{\bbP}{\mathbb{P}}
\newcommand{\bbR}{\mathbb{R}}
\newcommand{\bbQ}{\mathbb{Q}}
\newcommand{\bbZ}{\mathbb{Z}}
\DeclareMathOperator{\NChow}{NChow} 
\DeclareMathOperator{\NNum}{NNum} 
\newcommand{\dgcat}{\mathrm{dgcat}} 
\newcommand{\perf}{\mathrm{perf}}
\newcommand{\dg}{\mathrm{dg}}
\newcommand{\Hom}{\mathrm{Hom}}
\newcommand{\End}{\mathrm{End}}
\newcommand{\op}{\mathrm{op}}
\newcommand{\too}{\longrightarrow}
\let\oldmarginpar\marginpar
\def\marginpar#1{\oldmarginpar{\tiny #1}}
\begin{document}

\title[A note on secondary $K$-theory II]{A note on secondary $K$-theory II}
\author{Gon{\c c}alo~Tabuada}

\address{Gon{\c c}alo Tabuada, Department of Mathematics, MIT, Cambridge, MA 02139, USA.}
\email{tabuada@math.mit.edu}
\urladdr{http://math.mit.edu/~tabuada}
\thanks{The author was partially supported by a NSF CAREER Award}

\subjclass[2010]{14A22, 14F22, 14H20, 16E20, 16H05, 16K50}
\date{\today}


\abstract{This note is the sequel to [{\em A note on secondary $K$-theory}. Algebra and Number Theory {\bf 10} (2016), no. 4, 887--906]. Making use of the recent theory of noncommutative motives, we prove that the canonical map from the derived Brauer group to the secondary Grothendieck ring has the following injectivity properties: in the case of a regular integral quasi-compact quasi-separated scheme, it is injective; in the case of an integral normal Noetherian scheme with a single isolated singularity, it distinguishes any two derived Brauer classes whose difference is of infinite order. As an application, we show that the canonical map is injective in the case of affine cones over smooth projective plane complex curves of degree $\geq 4$ as well as in the case of Mumford's (celebrated) singular surface.}}

\maketitle
\vskip-\baselineskip
\vskip-\baselineskip



\section{Introduction and statement of results}
A {\em differential graded (=dg) category $\cA$}, over a base commutative ring $k$, is a category enriched over complexes of $k$-modules; consult \cite{ICM-Keller}. Every (dg) $k$-algebra gives naturally rise to a dg category with a single object. Another source of examples is provided by schemes since the category of perfect complexes $\perf(X)$ of every quasi-compact quasi-separated $k$-scheme $X$ admits a canonical dg enhancement $\perf_\dg(X)$; see \cite[\S4.6]{ICM-Keller}. When $X$ is moreover quasi-projective this dg enhancement is unique; see \cite[Thm.~2.12]{LO}. Following \cite{Miami,finMot,IAS}, a dg category $\cA$ is called {\em smooth} if it is compact as a bimodule over itself and {\em proper} if all the complexes of $k$-modules $\cA(x,y)$ are compact in the derived category $\cD(k)$. Examples include the dg categories $\perf_\dg(X)$ associated to smooth proper $k$-schemes $X$.

Let $k$ be a base commutative ring and $X$ a quasi-compact quasi-separated $k$-scheme. Following \cite[\S2.2]{Azumaya}, resp. \cite{Toen1}\cite[\S4.4]{Toen2}, the derived Brauer group $\mathrm{dBr}(X)$, resp. secondary Grothendieck ring $K_0^{(2)}(X)$, is defined as the set of Morita equivalence classes of sheaves of dg Azumaya algebras\footnote{Consult \cite[Appendix B]{Separable} for several properties of these sheaves of dg Azumaya algebras.} $A$, resp. as the quotient of~the~free abelian group on the Morita equivalence classes of sheaves of smooth proper dg categories $\cA$ by the relations $[\cB]=[\cA]+[\cC]$ arising from short exact sequences $0\to \cA \to \cB \to \cC \to 0$. The group structure of $\mathrm{dBr}(X)$, resp. the multiplication law of $K_0^{(2)}(X)$, is induced by the derived tensor product and the inverse of $A$, resp. $\cA$, is given by $A^\op$, resp. $\cA^\op$. Since a sheaf of dg Azumaya algebras is, in particular, a sheaf of smooth proper dg categories, we have a canonical map:
\begin{equation}\label{eq:canonical-1}
\mathrm{dBr}(X) \too K_0^{(2)}(X)\,.
\end{equation}
The canonical map \eqref{eq:canonical-1} may be understood as the ``categorification'' of the canonical map from the Picard group $\mathrm{Pic}(X)$ to the Grothendieck ring $K_0(X)$. However, in contrast with $\mathrm{Pic}(X) \to K_0(X)$, \eqref{eq:canonical-1} does not seem to admit a ``determinant'' map in the converse direction. This makes the study of the injectivity of the canonical map \eqref{eq:canonical-1} a rather difficult problem.

As proved in \cite[Cor.~3.8]{Azumaya} (see also \cite[Appendix~B]{Separable}), we have an isomorphism:
\begin{equation}\label{eq:bijection-key2}
\mathrm{dBr}(X) \simeq H^1_{\mathrm{et}}(X,\bbZ) \times H^2_{\mathrm{et}}(X,\bbG_m)\,.
\end{equation}
Via \eqref{eq:bijection-key2}, the Brauer group $\mathrm{Br}(X)$ corresponds to a subgroup of the cohomological Brauer group $\mathrm{Br}'(X):=H^2_{\mathrm{et}}(X,\bbG_m)_{\mathrm{tor}}$. Therefore, given any non-torsion class $\alpha \in H^2_{\mathrm{et}}(X, \bbG_m)$, there exists a sheaf of dg Azumaya algebras $A_\alpha$, representing $\alpha$, which is {\em not} Morita equivalent to a sheaf of ordinary Azumaya algebras. Recall from \cite[Cor.~7.9.1]{Pic-Weibel} that the group $H^1_{\mathrm{et}}(X,\bbZ)$ is torsion-free and that it vanishes whenever $X$ is normal. When $X$ is regular integral, we have $H^1_{\mathrm{et}}(X,\bbZ)=0$ and every class of $H^2_{\mathrm{et}}(X,\bbG_m)$ is torsion; see \cite[Prop.~1.4]{Grothendieck2}. Hence, in this latter case, $\mathrm{dBr}(X)$ reduces to $\mathrm{Br}'(X)$. Finally, 
whenever $X$ admits an ample line bundle, $\mathrm{Br}'(X)$ reduces\footnote{As proved in \cite[\S IV Prop.~2.15]{Milne}, the reduction of the cohomological Brauer group $\mathrm{Br}'(X)$ to the Brauer group $\mathrm{Br}(X)$ holds always locally in the Zariski topology.} to the Brauer group $\mathrm{Br}(X)$;~see~\cite{deJong}.

Our results are divided in two parts: in the first part we consider the case where $X$ is regular and in the second part we allow the existence of an isolated singularity.
\subsection*{First part}
Our first main result is the following:
\begin{theorem}\label{thm:main3}
When $X$ is a regular integral quasi-compact quasi-separated $k$-scheme, the canonical map \eqref{eq:canonical-1} is injective.
\end{theorem}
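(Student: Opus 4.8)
The plan is to reduce the statement, \emph{via the generic point}, to the case of a base field treated in the prequel; the theory of noncommutative motives then enters only through that special case.

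First I would exploit the regularity of $X$. Combining \eqref{eq:bijection-key2} with the facts recalled above — $H^1_{\mathrm{et}}(X,\bbZ)=0$ and every class of $H^2_{\mathrm{et}}(X,\bbG_m)$ torsion — one obtains $\mathrm{dBr}(X)\simeq\mathrm{Br}'(X)=H^2_{\mathrm{et}}(X,\bbG_m)_{\mathrm{tor}}$. Moreover \eqref{eq:canonical-1} is a homomorphism of abelian groups taking values in the units of $K_0^{(2)}(X)$, since $[A]\cdot[A^{\op}]=[A\otimes A^{\op}]=[\perf_\dg(X)]$ is the multiplicative unit. It therefore suffices to prove that the kernel of \eqref{eq:canonical-1} is trivial, i.e. that every sheaf of dg Azumaya algebras $A$ with $[A]=[\perf_\dg(X)]$ in $K_0^{(2)}(X)$ has trivial Brauer class.

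Next I would pass to the generic point. Pullback of sheaves of dg Azumaya algebras and of sheaves of smooth proper dg categories is compatible with \eqref{eq:canonical-1}, so the inclusion $\Spec k(X)\hookrightarrow X$ induces a commutative square whose vertical arrows are the restriction maps $\mathrm{dBr}(X)\to\mathrm{dBr}(k(X))$ and $K_0^{(2)}(X)\to K_0^{(2)}(k(X))$. Now $k(X)$ is a field, so $\mathrm{dBr}(k(X))=\mathrm{Br}(k(X))$ and $A_{k(X)}$ is an ordinary central simple algebra; and, $X$ being regular integral, the restriction map $\mathrm{Br}'(X)\hookrightarrow\mathrm{Br}(k(X))$ is injective by purity for the Brauer group. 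Hence it is enough to know that $[A_{k(X)}]=1$ in $K_0^{(2)}(k(X))$ forces $A_{k(X)}$ to be split — which is exactly the main result of the prequel. (Recall its mechanism: the additive invariant $\cA\mapsto U(\cA)$ induces a ring homomorphism $K_0^{(2)}(F)\to K_0(\NChow(F))$ sending $[A]$ to the class of the $\otimes$-invertible object $U(A)$, and $[U(A)]=[\mathbf{1}]$ forces $A$ split through an analysis of the noncommutative Chow motive of a central simple algebra, via $\Hom_{\NChow(F)}(\mathbf{1},U(A))\simeq K_0(A)\simeq\bbZ$ and the way its canonical generator composes — which sees the Schur index of $A$.) Taking $F=k(X)$ completes the argument.

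The hard part is precisely this last input — that $[U(A)]=[\mathbf{1}]$ in $K_0(\NChow(F))$ forces $A$ split — which is genuinely non-formal: no rank-type invariant detects it, and one really needs the fine structure of the noncommutative motives of central simple algebras. Everything else is routine, the only point requiring some care being that purity for the Brauer group, together with the vanishing and torsion statements for the \'etale cohomology of $X$, be available in the quasi-compact quasi-separated generality assumed here rather than only in the Noetherian one.
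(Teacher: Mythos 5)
Your global reduction is exactly the one the paper uses: identify $\mathrm{dBr}(X)$ with $H^2_{\mathrm{et}}(X,\bbG_m)$ for $X$ regular integral, pull back along $\Spec k(X)\to X$, invoke the injectivity of $H^2_{\mathrm{et}}(X,\bbG_m)\to H^2_{\mathrm{et}}(k(X),\bbG_m)$ (the paper cites \cite[Cor.~1.10]{Grothendieck2}), and thereby reduce to the field case \eqref{eq:canonical-new} over $F=k(X)$. The gap is in the step you call ``exactly the main result of the prequel.'' The prequel \cite{ANT} does \emph{not} prove that $\mathrm{Br}(F)\to K_0^{(2)}(F)$ is injective for an arbitrary field: it proves injectivity only when $\mathrm{char}(F)=0$, and in characteristic $p>0$ only the restriction to the $p$-primary torsion subgroup $\mathrm{Br}(F)\{p\}$ is shown to be injective. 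Since $X$ is an arbitrary regular integral quasi-compact quasi-separated $k$-scheme, $k(X)$ may very well have positive characteristic, and then Brauer classes of order prime to $p$ (or of mixed order) are simply not covered by the prequel; your argument therefore proves the theorem only when $k(X)$ has characteristic zero. Closing this hole is precisely the new content of \S\ref{sec:proof1}: given $[A]\neq[B]$ one picks a prime $p\mid\mathrm{ind}(A^\op\otimes B)$, passes to $\NNum(k)_{\bbF_p}$, uses Propositions \ref{prop:index1}--\ref{prop:Br-graded1} together with Remark \ref{rk:Dg}, splits off finitely many copies of $U(A)_{\bbF_p}$ and $U(B)_{\bbF_p}$ via Lemma \ref{lem:key11} (an Euler-characteristic argument), and then uses the cancellation property of $\mathrm{CSA}(k)^\oplus_{\bbF_p}$, equivalent to $\mathrm{Br}(k)\{p\}$-graded finite dimensional $\bbF_p$-vector spaces, to reach a contradiction --- all without the semi-simplicity input of \cite{ANT}.

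A second, related soft spot: your parenthetical ``mechanism'' ($\Hom_{\NChow(F)}(\mathbf{1},U(A))\simeq K_0(A)\simeq\bbZ$ detecting the Schur index) addresses why $U(A)\not\simeq U(F)$, but the actual difficulty in the Grothendieck ring is cancellation: $[U(A)]=[U(F)]$ in $K_0(\NChow(F))$ only gives $U(A)\oplus N\!\!M\simeq U(F)\oplus N\!\!M$ for some noncommutative Chow motive $N\!\!M$, and one must argue that the stray summand $N\!\!M$ can be removed. This is exactly what the passage to numerical motives with $\bbF_p$-coefficients and the graded-vector-space description of $\mathrm{CSA}(k)^\oplus_{\bbF_p}$ accomplish, so any correct write-up must include an argument of this kind (or the semi-simplicity argument of the prequel, in the characteristics where it applies) rather than a bare isomorphism-detection statement.
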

Theorem \ref{thm:main3} applies, in particular, to the following families of schemes:
\begin{example}[Toric varieties]
Let $k$ be an algebraically closed field of~characteristic zero, $n>0$ an integer, $\Delta$ a finite fan on $\bbR^n$, and $X$ the~associated toric $k$-variety. Following \cite[Thm.~1.1]{Ford}, whenever $X$ is smooth, we have $\mathrm{Br}'(X)\simeq \mathrm{Br}(X)\simeq \oplus^{n-1}_{i=1} \Hom(\bbZ/a_i,\bbQ/\bbZ)^{n-i}$, where $a_1, \ldots, a_n$ are the invariant factors of $\Delta$.
\end{example}
\begin{example}[Complex surfaces]
Let $X$ be a complex simply-connected smooth projective surface. Following \cite[\S5.4]{Caldararu}, we have $\mathrm{Br}'(X)\simeq \mathrm{Br}(X)\simeq \Hom(T(X),\bbQ/\bbZ)$, where $T(X)$ denotes the transcendental lattice $\mathrm{NS}(X)^\perp \subset H^2(X,\bbZ)$. In the case of a $K3$ surface, the preceding computation reduces to $\mathrm{Br}(X)\simeq (\bbQ/\bbZ)^{\oplus 22- \rho(X)}$, where $\rho(X)$ denotes the Picard number of $X$. 
\end{example}
\begin{example}[Calabi-Yau varieties]
Let $X$ be a complex smooth projective Calabi-Yau $n$-fold. Whenever $n\geq 3$, we have $\mathrm{Br}'(X)\simeq \mathrm{Br}(X)\simeq H^3(X,\bbZ)_{\mathrm{tor}}$. Consult \cite{Addington, Gross, Schnell} for the construction of derived Morita equivalent complex Calabi-Yau $3$-folds $X$ and $Y$ such that $\mathrm{Br}(X)\simeq \bbZ/8\times \bbZ/8$ and $\mathrm{Br}(Y)=0$. This shows, in particular, that the derived Brauer group is {\em not} derived Morita invariant.
\end{example}
\begin{example}[Rational varieties]
Let $k$ be a field and $X$ a smooth projective rational $k$-variety. Following \cite[\S7]{Grothendieck3}, we have $\mathrm{Br}'(X)\simeq\mathrm{Br}(X)\simeq\mathrm{Br}(k)$.
\end{example}
By construction, every morphism $f\colon X \to Y$ between quasi-compact quasi-separated $k$-schemes gives rise to the following commutative square:
\begin{equation*}
\xymatrix{
\mathrm{dBr}(Y) \ar[d]_-{f^\ast} \ar[r]^-{\eqref{eq:canonical-1}} & K_0^{(2)}(Y) \ar[d]^-{f^\ast} \\
\mathrm{dBr}(X) \ar[r]_-{\eqref{eq:canonical-1}} & K_0^{(2)}(X)\,.
}
\end{equation*}
\begin{corollary}\label{cor:main1}
Let $[A], [B] \in \mathrm{dBr}(Y)$. If there exists a morphism $f\colon X \to Y$, with $X$ as in Theorem \ref{thm:main3}, such that $[f^\ast(A)]\neq[f^\ast(B)]$ in $\mathrm{dBr}(X)$, then the images of $[A]$ and $[B]$ under the canonical map \eqref{eq:canonical-1} are different.
\end{corollary}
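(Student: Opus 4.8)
The plan is to argue by contradiction using the functoriality square displayed just before the statement, together with the injectivity result of Theorem~\ref{thm:main3}. Concretely, suppose for contradiction that the images of $[A]$ and $[B]$ under the canonical map \eqref{eq:canonical-1} for $Y$ coincide in $K_0^{(2)}(Y)$. I would then push this equality forward along the pullback homomorphism $f^\ast \colon K_0^{(2)}(Y) \to K_0^{(2)}(X)$, obtaining an equality in $K_0^{(2)}(X)$ between the images of $[A]$ and $[B]$ under the composite $\mathrm{dBr}(Y) \to K_0^{(2)}(Y) \to K_0^{(2)}(X)$.

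Next I would invoke the commutativity of the square
\[
\xymatrix{
\mathrm{dBr}(Y) \ar[d]_-{f^\ast} \ar[r]^-{\eqref{eq:canonical-1}} & K_0^{(2)}(Y) \ar[d]^-{f^\ast} \\
\mathrm{dBr}(X) \ar[r]_-{\eqref{eq:canonical-1}} & K_0^{(2)}(X)
}
\]
to rewrite this composite as $\mathrm{dBr}(Y) \xrightarrow{f^\ast} \mathrm{dBr}(X) \xrightarrow{\eqref{eq:canonical-1}} K_0^{(2)}(X)$. Hence the classes $[f^\ast(A)]$ and $[f^\ast(B)]$ of $\mathrm{dBr}(X)$ have the same image under the canonical map \eqref{eq:canonical-1} associated to $X$.

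Finally, since $X$ is, by hypothesis, a regular integral quasi-compact quasi-separated $k$-scheme, Theorem~\ref{thm:main3} guarantees that the canonical map \eqref{eq:canonical-1} for $X$ is injective. Therefore $[f^\ast(A)] = [f^\ast(B)]$ in $\mathrm{dBr}(X)$, contradicting the assumption that $[f^\ast(A)] \neq [f^\ast(B)]$. This contradiction forces the images of $[A]$ and $[B]$ under \eqref{eq:canonical-1} to be distinct in $K_0^{(2)}(Y)$, as claimed.

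This argument is a pure diagram chase, so I do not anticipate any genuine obstacle; the only points requiring care are that the square above is indeed commutative (which holds by the naturality of the constructions of $\mathrm{dBr}(-)$, $K_0^{(2)}(-)$, and \eqref{eq:canonical-1} under pullback, as noted in the excerpt) and that the scheme $X$ supplied by the hypothesis satisfies exactly the hypotheses of Theorem~\ref{thm:main3}, which is built into the statement.
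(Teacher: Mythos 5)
Your proof is correct and is exactly the argument the paper intends: the corollary is an immediate consequence of the commutative square preceding it together with the injectivity of \eqref{eq:canonical-1} for $X$ supplied by Theorem \ref{thm:main3}. Your contradiction phrasing is just the contrapositive of this diagram chase, so there is nothing to add.
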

\begin{example}[Henselian local rings]\label{ex:localII}
Let $S$ be an henselian local $k$-algebra with residue field $k'$. As proved in \cite[\S IV Cor.~2.13]{Milne}, the assignment $A \mapsto A \otimes^{\bf L}_S k'$ gives rise to a group isomorphism $\mathrm{Br}(S) \simeq \mathrm{Br}(k')$. Moreover, $H^1_{\mathrm{et}}(\mathrm{Spec}(S),\bbZ)=0$ and $\mathrm{Br}(S)\simeq H^2_{\mathrm{et}}(\mathrm{Spec}(S),\bbG_m)$; see \cite[Thm.~2.5]{Pic-Weibel} and \cite[Cor.~2.5]{Grothendieck}, respectively. Therefore, Corollary \ref{cor:main1} (with $X=\mathrm{Spec}(k')$ and $Y=\mathrm{Spec}(S)$) implies that the canonical map $\mathrm{dBr}(Y) \to K_0^{(2)}(Y)$ is injective.
\end{example}
\begin{remark}[Previous results from \cite{ANT}]
In the particular case where $k$ is a field\footnote{When $k$ is a field, every dg Azumaya $k$-algebra is Morita equivalent to an ordinary Azumaya $k$-algebra (=central simple $k$-algebra); see \cite[Prop.~2.12]{Azumaya}.} and $X=\mathrm{Spec}(k)$, let us write the canonical map \eqref{eq:canonical-1} as follows:
\begin{equation}\label{eq:canonical-new}
\mathrm{Br}(k) \too K_0^{(2)}(k)\,.
\end{equation}
The main results of \cite{ANT} are the following:
\begin{itemize}
\item[(a)] When $\mathrm{char}(k)=0$, the canonical map \eqref{eq:canonical-new} is injective; see \cite[Thm.~2.5]{ANT}.
\item[(b)] When $\mathrm{char}(k)=p>0$, the restriction of the canonical map \eqref{eq:canonical-new} to the $p$-primary torsion subgroup $\mathrm{Br}(k)\{p\}$ is injective; see \cite[Thm.~2.7 and Cor.~2.8]{ANT}.
\end{itemize}
The proofs of items (a)-(b) are different. Moreover, they both use in an essential way the semi-simplicity property of the category of noncommutative numerical motives. In \S\ref{sec:proof1} below we prove that the canonical map \eqref{eq:canonical-new} is always injective! Our new proof works in arbitrary characteristic and does not uses the semi-simplicity property of the category of noncommutative numerical motives. 

The proof of Theorem \ref{thm:main3} is obtained by combining the injectivity of \eqref{eq:canonical-new} with the injectivity of the restriction homomorphism $H^2_{\mathrm{et}}(X;\bbG_m) \to H^2_{\mathrm{et}}(k(X),\bbG_m)$, where $k(X)$ stands for the function field of $X$; consult \S\ref{sec:proof1} for details. 
\end{remark}

\subsection*{Second part}
Our second main result is the following:
\begin{theorem}\label{thm:five}
Let $X$ be an integral normal Noetherian\footnote{Recall that a Noetherian scheme is quasi-compact and quasi-separated.} $k$-scheme, which admits an ample line bundle, with a single isolated singularity. Given sheaves of dg Azumaya algebras $A$ and $B$ such that $A^\op \otimes^{\bf L} B$ is not Morita equivalent to a sheaf of ordinary Azumaya algebras, the images of $[A]$ and $[B]$ under the canonical map \eqref{eq:canonical-1} are different.
\end{theorem}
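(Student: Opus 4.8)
The plan is to reduce the assertion to the non-vanishing of (the image of) a single cohomology class, to dispose of its ``generic'' part via Theorem~\ref{thm:main3}, and to treat the part ``supported at the singular point'' by localizing there; I expect this last part to be the main obstacle.

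\emph{Reductions.} Since $X$ is normal, $H^1_{\mathrm{et}}(X,\bbZ)=0$, so \eqref{eq:bijection-key2} gives $\mathrm{dBr}(X)\simeq H^2_{\mathrm{et}}(X,\bbG_m)$; since $X$ carries an ample line bundle, $\mathrm{Br}'(X)=\mathrm{Br}(X)$, so the classes of $H^2_{\mathrm{et}}(X,\bbG_m)$ representable by sheaves of \emph{ordinary} Azumaya algebras are exactly the torsion ones. As $A\otimes^{\bf L}A^\op$ is Morita equivalent to $\cO_X$, the canonical map \eqref{eq:canonical-1} is a group homomorphism $\mathrm{dBr}(X)\to K_0^{(2)}(X)^\times$; hence the statement is equivalent to: \emph{the class $\gamma:=[A^\op\otimes^{\bf L}B]\in H^2_{\mathrm{et}}(X,\bbG_m)$, which by hypothesis is non-torsion, is not sent to $[\cO_X]$ under \eqref{eq:canonical-1}.}

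\emph{The regular locus.} Let $z$ be the isolated singular point and $j\colon U:=X\setminus\{z\}\hookrightarrow X$ the inclusion of the regular locus, which is integral, regular and Noetherian (hence quasi-compact and quasi-separated), so that Theorem~\ref{thm:main3} applies to $U$. By the commutative square preceding Corollary~\ref{cor:main1}, if $j^\ast(\gamma)\neq 0$ in $\mathrm{dBr}(U)$, then the image of $\gamma$ in $K_0^{(2)}(X)$ restricts to something $\neq[\cO_U]$ and we are done. Since $H^2_{\mathrm{et}}(U,\bbG_m)$ is torsion ($U$ regular), this does \emph{not} cover all non-torsion $\gamma$: the remaining, essential case is $j^\ast(\gamma)=0$, i.e.\ $\gamma$ lies in the kernel of $H^2_{\mathrm{et}}(X,\bbG_m)\to H^2_{\mathrm{et}}(U,\bbG_m)$.

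\emph{The singular point (the crux).} Here $\gamma$ is ``supported at $z$''. Normality gives $H^{0}_{\{z\}}(X,\bbG_m)=H^{1}_{\{z\}}(X,\bbG_m)=0$ and $\mathrm{Pic}(X)\hookrightarrow\mathrm{Pic}(U)$, so the localization sequence for $(X,U)$ presents $\gamma$ as the image of a class in $H^2_{\{z\}}(X,\bbG_m)$, which by \'etale excision equals $H^2_{\{z\}}(\Spec\cO^h_{X,z},\bbG_m)$; the localization sequence for the Henselian local scheme $\Spec\cO^h_{X,z}$ and its \emph{regular} punctured spectrum $V$ — together with $\mathrm{Pic}(\cO^h_{X,z})=0$ and $H^2_{\mathrm{et}}(\Spec\cO^h_{X,z},\bbG_m)\simeq\mathrm{Br}(\cO^h_{X,z})\simeq\mathrm{Br}(\kappa(z))$ (as in Example~\ref{ex:localII}) — then exhibits $\gamma$, modulo the torsion group $\mathrm{Br}(\kappa(z))$, as governed by a divisor class in $\mathrm{Pic}(V)$ which does not descend to $\mathrm{Pic}(X)$. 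To conclude, I would use the Mayer--Vietoris (Beauville--Laszlo) decomposition of $X$ into $U$ and the formal/Henselian neighborhood of $z$, glued along $V$, and its counterparts for secondary $K$-theory and for noncommutative motives, in order to express the image of $\gamma$ in $K_0^{(2)}(X)$ through this gluing datum, invoking Theorem~\ref{thm:main3} on $U$ and on $V$ and the injectivity of \eqref{eq:canonical-new} for $\kappa(z)$. The hard point is precisely this: the non-triviality of $\gamma$ is a \emph{gluing} phenomenon — it is invisible both on $U$, where $\gamma|_U=0$, and on $\Spec\cO^h_{X,z}$, where $\gamma$ becomes torsion — so one must manufacture an additive invariant of the sheaf of smooth proper dg categories $A^\op\otimes^{\bf L}B$, which has rank one over $\cO_X$ yet is not Morita trivial, that detects the Picard/divisor-class data of the punctured local scheme.
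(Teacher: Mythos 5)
Your reductions are fine, and the case you can handle (the class $\gamma=[A^\op\otimes^{\bf L}B]$ restricting nontrivially to the regular locus $U=X\setminus\{z\}$, treated via Theorem \ref{thm:main3}) is correct but is not the hard part. The proof has a genuine gap exactly where you say it does: when $\gamma$ lies in the kernel of $H^2_{\mathrm{et}}(X,\bbG_m)\to H^2_{\mathrm{et}}(U,\bbG_m)$ (equivalently, of the restriction to $k(X)$), which is precisely the situation in the motivating examples, your argument stops at a speculative gluing step. A Mayer--Vietoris/Beauville--Laszlo descent statement for $K_0^{(2)}$ and for noncommutative motives along $(U,\Spec\cO^h_{X,z},V)$ is not available in this paper and is a substantial open-ended undertaking, not a routine verification; and your own localization choice makes matters worse, since on the Henselization the class becomes torsion ($H^2_{\mathrm{et}}(\Spec\cO^h_{X,z},\bbG_m)\simeq \mathrm{Br}(\kappa(z))$), so the non-torsion information you need to detect has already been destroyed before you try to detect it. No additive invariant is ``manufactured,'' so the statement is not proved.

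The paper's route avoids this entirely by localizing at the \emph{Zariski} local ring rather than the Henselization. By \cite[Thm.~4.1]{OPS} (see also \cite[Lem.~3]{Bertuccioni}), for $X$ integral normal with an ample line bundle and a single isolated singularity $x$, the restriction $H^2_{\mathrm{et}}(X,\bbG_m)\to H^2_{\mathrm{et}}(\cO_{X,x},\bbG_m)$ is \emph{injective}; combined with $\mathrm{Br}(X)\simeq H^2_{\mathrm{et}}(X,\bbG_m)_{\mathrm{tor}}$ and $\mathrm{Br}(\cO_{X,x})\simeq H^2_{\mathrm{et}}(\cO_{X,x},\bbG_m)_{\mathrm{tor}}$, this shows that $f^\ast(A^\op\otimes^{\bf L}B)$ is still \emph{not} Morita equivalent to an ordinary Azumaya $\cO_{X,x}$-algebra. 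The second missing ingredient is a local theorem over an arbitrary commutative ring, not just a field: for $k$ with $K_0(k)_\bbQ\simeq\bbQ$ (e.g.\ the local ring $\cO_{X,x}$), dg Azumaya algebras whose ``difference'' is not ordinary have distinct classes in $K_0^{(2)}(k)$ (Theorem \ref{thm:main22}). Its proof rests on \cite[Thm.~B.15]{Separable} (Theorem \ref{thm:last}), giving $U(A)_\bbQ\not\simeq U(k)_\bbQ$ in $\NChow(k)_\bbQ$, followed by a Hom-vanishing and cancellation argument in $\NNum(k)_\bbQ$. Your plan invokes only the field-level injectivity of \eqref{eq:canonical-new} for $\kappa(z)$, which cannot play this role; without an analogue of \cite[Thm.~B.15]{Separable} and the OPS injectivity (or a genuinely new descent theorem for secondary $K$-theory), the crucial case remains unproved.
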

Intuitively speaking, Theorem \ref{thm:five} shows that the canonical map \eqref{eq:canonical-1} distinguishes any two derived Brauer classes whose difference is of infinite order.
\begin{corollary}\label{cor:last}
Given a sheaf of dg Azumaya algebras $A$ which is not Morita equivalent to a sheaf of ordinary Azumaya algebras, the restriction of the canonical map \eqref{eq:canonical-1} to the subgroup $\bbZ$ of $\mathrm{dBr}(X)$ generated by $[A]$ is injective.
\end{corollary}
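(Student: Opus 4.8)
The plan is to deduce the statement from Theorem~\ref{thm:five} by applying it to the successive derived tensor powers of $A$; the only genuine content is the translation of the hypothesis on $A$ into the hypothesis that appears in Theorem~\ref{thm:five}. Throughout, $X$ is as in Theorem~\ref{thm:five}.

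First I would fix the relevant description of $\mathrm{dBr}(X)$. Since $X$ is normal, $H^1_{\mathrm{et}}(X,\bbZ)=0$, so \eqref{eq:bijection-key2} specializes to a group isomorphism $\mathrm{dBr}(X)\simeq H^2_{\mathrm{et}}(X,\bbG_m)$; and since $X$ admits an ample line bundle, de~Jong's theorem gives $\mathrm{Br}'(X)=\mathrm{Br}(X)$, i.e. $\mathrm{Br}(X)=H^2_{\mathrm{et}}(X,\bbG_m)_{\mathrm{tor}}$ inside $\mathrm{dBr}(X)$. Under these identifications a sheaf of dg Azumaya algebras is Morita equivalent to a sheaf of ordinary Azumaya algebras if and only if its class in $\mathrm{dBr}(X)$ is torsion. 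Hence the hypothesis that $A$ is not Morita equivalent to a sheaf of ordinary Azumaya algebras says exactly that $[A]$ has infinite order in the abelian group $\mathrm{dBr}(X)$; consequently $\bbZ\cdot[A]\subset\mathrm{dBr}(X)$ is free of rank one on $[A]$, and for any two distinct integers $n\neq m$ the class $(m-n)[A]$ again has infinite order, so it is non-torsion and therefore is not represented by any sheaf of ordinary Azumaya algebras.

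Next, for each $n\in\bbZ$ I would choose a sheaf of dg Azumaya algebras $A_n$ with $[A_n]=n[A]$ in $\mathrm{dBr}(X)$ --- for instance $A_n:=A\otimes^{\bf L}\cdots\otimes^{\bf L}A$ ($n$ factors) for $n>0$, $A_0:=\cO_X$, and $A_n:=A^\op\otimes^{\bf L}\cdots\otimes^{\bf L}A^\op$ ($-n$ factors) for $n<0$; such representatives exist because, by definition, $\mathrm{dBr}(X)$ is the set of Morita equivalence classes of sheaves of dg Azumaya algebras, with group law induced by $\otimes^{\bf L}$ and inversion by $(-)^\op$. Now fix $n,m\in\bbZ$ with $n\neq m$ and apply Theorem~\ref{thm:five} to the pair $(A_n,A_m)$: the class of $A_n^\op\otimes^{\bf L}A_m$ in $\mathrm{dBr}(X)$ is $-n[A]+m[A]=(m-n)[A]$, which is non-torsion by the preceding paragraph, so $A_n^\op\otimes^{\bf L}A_m$ is not Morita equivalent to a sheaf of ordinary Azumaya algebras. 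Theorem~\ref{thm:five} then gives that the images of $[A_n]=n[A]$ and $[A_m]=m[A]$ under \eqref{eq:canonical-1} are different. As $n,m$ were arbitrary distinct integers, the restriction of \eqref{eq:canonical-1} to $\bbZ\cdot[A]$ is injective.

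I do not expect a real obstacle: once the dictionary ``Morita equivalent to a sheaf of ordinary Azumaya algebras'' $\Longleftrightarrow$ ``torsion class in $H^2_{\mathrm{et}}(X,\bbG_m)$'' is in place --- which is immediate from the vanishing of $H^1_{\mathrm{et}}(X,\bbZ)$ for normal $X$ together with de~Jong's comparison $\mathrm{Br}'(X)=\mathrm{Br}(X)$ --- the corollary is a purely formal consequence of Theorem~\ref{thm:five}. The only point that deserves an explicit word is that the $n$-fold derived tensor power of $A$ is again, up to Morita equivalence, a sheaf of dg Azumaya algebras representing $n[A]$, which is part of the very definition of the group structure on $\mathrm{dBr}(X)$.
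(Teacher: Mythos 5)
Your proposal is correct and follows exactly the route the paper intends: the paper treats Corollary~\ref{cor:last} as an immediate consequence of Theorem~\ref{thm:five}, using the same dictionary (via \eqref{eq:bijection-key2}, normality, and de~Jong's $\mathrm{Br}'(X)=\mathrm{Br}(X)$) that ``not Morita equivalent to a sheaf of ordinary Azumaya algebras'' means ``non-torsion class'', so that for $n\neq m$ the difference $(m-n)[A]$ is non-torsion and Theorem~\ref{thm:five} applied to representatives of $n[A]$ and $m[A]$ separates their images under \eqref{eq:canonical-1}. Your explicit choice of tensor-power representatives and the verification of the hypothesis of Theorem~\ref{thm:five} fill in precisely the details the paper leaves implicit.
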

\begin{corollary}\label{cor:(non)torsion}
Given a sheaf of dg Azumaya algebras $A$ which is not Morita equivalent to a sheaf of ordinary Azumaya algebras, the image of $[A]$ under \eqref{eq:canonical-1} is different from the images of the sheaves of ordinary Azumaya algebras.
\end{corollary}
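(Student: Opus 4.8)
The plan is to deduce the statement directly from Theorem~\ref{thm:five}. Fix an arbitrary sheaf of ordinary Azumaya algebras $B$ on $X$; it suffices to show that the images of $[A]$ and $[B]$ under \eqref{eq:canonical-1} are different. In view of Theorem~\ref{thm:five} (applied to the pair $(A,B)$), this reduces to verifying that $A^\op \otimes^{\bf L} B$ is \emph{not} Morita equivalent to a sheaf of ordinary Azumaya algebras.

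The key point is that the Morita equivalence classes represented by sheaves of \emph{ordinary} Azumaya algebras form a subgroup of $\mathrm{dBr}(X)$, namely the Brauer group $\mathrm{Br}(X) \subseteq \mathrm{Br}'(X)$ under the identification \eqref{eq:bijection-key2}. Concretely, the opposite $B^\op$ of an ordinary Azumaya algebra is again an ordinary Azumaya algebra, and, since ordinary Azumaya algebras are locally free $\mathcal{O}_X$-modules of finite rank, the derived tensor product of two of them coincides with the underived tensor product and is once more an ordinary Azumaya algebra. Hence $\mathrm{Br}(X)$ is closed under the group operations of $\mathrm{dBr}(X)$.

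Now argue by contradiction. Suppose $A^\op \otimes^{\bf L} B$ were Morita equivalent to a sheaf of ordinary Azumaya algebras $C$. Then in the abelian group $\mathrm{dBr}(X)$ we would have $-[A]+[B]=[C]$, whence $[A]=[B]+[C^\op]=[B\otimes_{\mathcal{O}_X} C^\op]$, and $B\otimes_{\mathcal{O}_X} C^\op$ is an ordinary Azumaya algebra by the closure property above. This contradicts the hypothesis that $A$ is not Morita equivalent to a sheaf of ordinary Azumaya algebras. Therefore $A^\op \otimes^{\bf L} B$ is not Morita equivalent to a sheaf of ordinary Azumaya algebras, and Theorem~\ref{thm:five} yields that the images of $[A]$ and $[B]$ under \eqref{eq:canonical-1} are different.

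Since the whole argument is a formal manipulation in $\mathrm{dBr}(X)$ once Theorem~\ref{thm:five} is available, there is no substantive obstacle; the only ingredient requiring a (standard) check is the closure of the class of ordinary Azumaya algebras under opposites and derived tensor products, for which one invokes their perfectness (local freeness of finite rank) as $\mathcal{O}_X$-modules.
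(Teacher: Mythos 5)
Your proof is correct, and it reduces to Theorem~\ref{thm:five} with $B$ ordinary exactly as the paper does; the difference lies in how the hypothesis that $A^\op \otimes^{\bf L} B$ is not Morita equivalent to a sheaf of ordinary Azumaya algebras is verified. You check it by a purely formal coset argument inside $\mathrm{dBr}(X)$: the classes of ordinary Azumaya algebras form a subgroup (closed under opposites and under tensor products, which are underived by local freeness of finite rank), so if $-[A]+[B]$ lay in that subgroup then so would $[A]=[B]+[C^\op]$, contradicting the hypothesis on $A$. The paper instead invokes the isomorphism \eqref{eq:bijection-key2} and the torsion dichotomy: since $X$ is normal with an ample line bundle, $\mathrm{dBr}(X)\simeq H^2_{\mathrm{et}}(X,\bbG_m)$ and the ordinary Azumaya classes are exactly the torsion classes ($\mathrm{Br}(X)=\mathrm{Br}'(X)$ by de Jong--Gabber), so $[A]$ is non-torsion, $[B]$ is torsion, and a non-torsion plus a torsion element is non-torsion. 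Your route is more elementary and slightly more general, since it needs only the inclusion of $\mathrm{Br}(X)$ as a subgroup of $\mathrm{dBr}(X)$ and never uses $\mathrm{Br}(X)=\mathrm{Br}'(X)$ or the \'etale-cohomological description; the paper's route is what justifies the interpretation, stated right after the corollary, that the canonical map \eqref{eq:canonical-1} distinguishes torsion from non-torsion classes. Both arguments are complete.
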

\begin{proof}
Combine isomorphism \eqref{eq:bijection-key2} and Theorem \ref{thm:five} (with $B$ a sheaf of ordinary Azumaya algebras) with the fact that in every abelian group the product of a non-torsion element with a torsion element is always a non-torsion element.
\end{proof}
Corollary \ref{cor:(non)torsion} shows that \eqref{eq:canonical-1} distinguishes torsion from non-torsion classes. 
\begin{corollary}\label{cor:new}
Let $X$ be as in Theorem \ref{thm:five}. Whenever the kernel of the restriction homomorphism $H^2_{\mathrm{et}}(X,\bbG_m) \to H^2_{\mathrm{et}}(k(X),\bbG_m)$ is torsion-free, the canonical map \eqref{eq:canonical-1} is injective.
\end{corollary}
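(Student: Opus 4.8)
The plan is to deduce Corollary \ref{cor:new} formally from Theorem \ref{thm:five}, the injectivity of the canonical map over a field, and the behaviour under restriction to the generic point. Since $X$ is normal, \cite[Cor.~7.9.1]{Pic-Weibel} gives $H^1_{\mathrm{et}}(X,\bbZ)=0$, so the isomorphism \eqref{eq:bijection-key2} identifies $\mathrm{dBr}(X)$ with $H^2_{\mathrm{et}}(X,\bbG_m)$ and, under this identification, \eqref{eq:canonical-1} with a homomorphism out of the abelian group $H^2_{\mathrm{et}}(X,\bbG_m)$. Hence injectivity of \eqref{eq:canonical-1} amounts to showing that if $[A],[B]\in\mathrm{dBr}(X)$ have the same image in $K_0^{(2)}(X)$, then $[A]=[B]$; equivalently, writing $\alpha:=[A^\op\otimes^{\bf L}B]\in H^2_{\mathrm{et}}(X,\bbG_m)$, that $\alpha=0$.

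First I would use Theorem \ref{thm:five}: its contrapositive says that the equality of the images of $[A]$ and $[B]$ in $K_0^{(2)}(X)$ forces $A^\op\otimes^{\bf L}B$ to be Morita equivalent to a sheaf of \emph{ordinary} Azumaya algebras. Since the class of an ordinary Azumaya algebra lies in $\mathrm{Br}(X)\subseteq\mathrm{Br}'(X)=H^2_{\mathrm{et}}(X,\bbG_m)_{\mathrm{tor}}$, the element $\alpha$ is torsion. Next I would restrict to the generic point: let $\eta=\mathrm{Spec}(k(X))$ and $f\colon\eta\to X$ be the canonical morphism. By the commutative square displayed just above Corollary \ref{cor:main1}, the images of $[f^\ast(A)]$ and $[f^\ast(B)]$ in $K_0^{(2)}(k(X))$ coincide. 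As $k(X)$ is a field, $\mathrm{dBr}(k(X))=\mathrm{Br}(k(X))$ by \cite[Prop.~2.12]{Azumaya}, and the canonical map \eqref{eq:canonical-new} is injective in arbitrary characteristic (proved in \S\ref{sec:proof1}); therefore $[f^\ast(A)]=[f^\ast(B)]$ in $\mathrm{Br}(k(X))$, i.e. the image of $\alpha$ in $H^2_{\mathrm{et}}(k(X),\bbG_m)$ vanishes. Thus $\alpha$ is a torsion element of the kernel of $H^2_{\mathrm{et}}(X,\bbG_m)\to H^2_{\mathrm{et}}(k(X),\bbG_m)$, which by hypothesis is torsion-free; hence $\alpha=0$ and $[A]=[B]$.

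The proof is short because it merely assembles inputs already at hand; the only two points requiring any care are the functoriality of \eqref{eq:canonical-1} along the pullback $f\colon\eta\to X$ (that is, compatibility of the canonical map with restriction to the generic point, which is exactly the commutative square preceding Corollary \ref{cor:main1}) and the observation that one genuinely needs \emph{both} halves of the argument — torsionness of $\alpha$ from Theorem \ref{thm:five} and triviality of $\alpha$ at $\eta$ from the field case — in order to invoke the torsion-freeness hypothesis. I do not anticipate a substantive obstacle here: everything else is formal once $H^1_{\mathrm{et}}(X,\bbZ)=0$ is used to pass from $\mathrm{dBr}(X)$ to $H^2_{\mathrm{et}}(X,\bbG_m)$.
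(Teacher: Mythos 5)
Your proof is correct and follows essentially the same route as the paper: the paper likewise uses the commutative square for $f\colon \mathrm{Spec}(k(X))\to X$ and "combines Theorem \ref{thm:five} with the injectivity of \eqref{eq:canonical-new} over $k(X)$", which is exactly your two-step argument (Theorem \ref{thm:five} forces $\alpha=[A^\op\otimes^{\bf L}B]$ to be torsion, the field case forces $\alpha$ into the torsion-free kernel, hence $\alpha=0$). Your write-up simply makes explicit the steps the paper leaves implicit.
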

\begin{proof}
By construction, we have the following commutative square
$$
\xymatrix{
H^2_{\mathrm{et}}(X,\bbG_m) \simeq \mathrm{dBr}(X) \ar[d]_-{f^\ast} \ar[rr]^-{\eqref{eq:canonical-1}} && K_0^{(2)}(X) \ar[d]^-{f^\ast} \\
H^2_{\mathrm{et}}(k(X),\bbG_m) \simeq \mathrm{Br}(k(X)) \ar[rr]_-{\eqref{eq:canonical-1}} && K_0^{(2)}(k(X))\,,
}
$$
where $f\colon \mathrm{Spec}(k(X)) \to X$ stands for the canonical morphism. Hence, since by assumption the kernel of the left-hand side vertical homomorphism is torsion-free, the proof follows from the combination of Theorem \ref{thm:five} with the injectivity of the canonical map \eqref{eq:canonical-new} (with $k$ replaced by $k(X)$).
\end{proof}
Corollary \ref{cor:new} applies, in particular, to the following singular schemes:
\begin{example}[Cones over curves]\label{ex:curves}
Let $X$ be the affine cone over a smooth projective plane complex curve of degree $\geq 4$. By construction, $X$ has a single isolated singularity. As explained in  \cite[Pages~10-11]{Childs}\cite[Page~41]{Contemporary}, the kernel of the restriction homomorphism $H^2_{\mathrm{et}}(X,\bbG_m) \to H^2_{\mathrm{et}}(\bbC(X),\bbG_m)$ is torsion-free; this kernel is {\em not} finitely generated. Therefore, Corollary \ref{cor:new} implies that the canonical map \eqref{eq:canonical-1} is injective.
\end{example}
\begin{example}[Mumford's surface]\label{ex:Mumford1}
Let $C$ be a complex smooth cubic curve in $\bbP^2$ and $x_1, \ldots, x_{15}$ points of $C$ which are in general position except that on $C$ the divisor $\sum_i x_i \equiv 5 H$, where $H$ denotes an hyperplane section. Following \cite[Page 16]{Mumford}, blow-up every point $x_i$ and call $X'$ the resulting surface. The linear system of quintics through the $x_i$'s contracts the proper transform of $C$ and yields a normal projective surface $X$ with a single isolated singularity. As explained in \cite[Page~75]{Grothendieck2}, the kernel of the restriction homomorphism $H^2_{\mathrm{et}}(X,\bbG_m) \to H^2_{\mathrm{et}}(\bbC(X),\bbG_m)$ is torsion-free;  this kernel is {\em not} finitely generated. Therefore, Corollary \ref{cor:new} implies that the canonical map \eqref{eq:canonical-1} is injective.
%
\end{example}
\begin{remark}
Let $X$ be as in Corollary \ref{cor:new}. Note that since the kernel of the restriction homomorphism $H^2_{\mathrm{et}}(X,\bbG_m) \to H^2_{\mathrm{et}}(k(X),\bbG_m)$ is torsion-free, the restriction homomorphism $\mathrm{Br}(X)=\mathrm{Br}'(X) \to \mathrm{Br}(k(X))$ is injective. This applies, notably, to the above Examples \ref{ex:curves}-\ref{ex:Mumford1}.
\end{remark}

\section{Background on noncommutative motives}
For a book, resp. survey, on noncommutative motives consult \cite{book}, resp. \cite{survey}. Let $k$ be a base commutative ring and $R$ a commutative ring of coefficients. Recall from \cite[\S4.1]{book} the construction of the category of {\em noncommutative Chow motives} $\NChow(k)_R$. This category is $R$-linear, additive, idempotent complete, and rigid symmetric monoidal, Moreover, it comes equipped with a $\otimes$-functor $U(-)_R\colon \dgcat_{\mathrm{sp}}(k) \to \NChow(k)_R$ defined on the category of smooth proper dg categories. Furthermore, we have the following isomorphisms:
\begin{equation}\label{eq:iso-last}
 \Hom_{\NChow(k)_R}(U(\cA)_R,U(\cB)_R)\simeq K_0(\cD_c(\cA^\op \otimes^{\bf L} \cB))_R =: K_0(\cA^\op \otimes^{\bf L} \cB)_R\,.
\end{equation}
Under \eqref{eq:iso-last}, the composition law of $\NChow(k)_R$ corresponds to the derived tensor product of bimodules. In the case where $R=\bbZ$, we will write $\NChow(k)$ instead of $\NChow(k)_\bbZ$ and $U$ instead of $U(-)_\bbZ$.
\begin{remark}[Dg Azumaya algebras]\label{rk:Dg}
Let $A$ be a dg Azumaya $k$-algebra. Similarly to ordinary Azumaya $k$-algebras (see \cite[Lem.~8.10]{JIMJ}), we have the $\otimes$-equivalence of symmetric monoidal triangulated categories $\cD_c(k) \to \cD_c(A^\op \otimes^{\bf L} A), M \mapsto M\otimes^{\bf L} A$, where the symmetric monoidal structure of $\cD_c(k)$, resp. $\cD_c(A^\op \otimes^{\bf L} A)$, is given by $-\otimes^{\bf L}-$, resp. $-\otimes^{\bf L}_{A}-$. Consequently, we obtain an induced ring isomorphism
\begin{equation}\label{eq:ring-End}
\End_{\NChow(k)}(U(k)) \stackrel{\sim}{\too} \End_{\NChow(k)}(U(A))\,.
\end{equation}
\end{remark}
Given a rigid symmetric monoidal category $\cC$, its {\em $\cN$-ideal} is defined as follows
$$ \cN(a,b):=\{f \in \Hom_\cC(a,b)\,|\, \forall g \in \Hom_\cC(b,a)\,\,\mathrm{we}\,\,\mathrm{have}\,\,\mathrm{tr}(g\circ f)=0 \}\,,$$
where $\mathrm{tr}(g\circ f)$ denotes the categorical trace of the endomorphism $g\circ f$. The category of {\em noncommutative numerical motives} $\NNum(k)_R$ is defined as the idempotent completion of the quotient of $\NChow(k)_R$ by the $\otimes$-ideal $\cN$. By construction, this category  is $R$-linear, additive, idempotent complete, and rigid symmetric monoidal. 
\begin{notation}\label{not:CSA}
In the case where $k$ is a field, we will write $\mathrm{CSA}(k)_R$ for the full subcategory of $\NNum(k)_R$ consisting of the objects $U(A)_R$ with $A$ a central simple $k$-algebra, and $\mathrm{CSA}(k)^\oplus_R$ for the closure of $\mathrm{CSA}(k)_R$ under finite direct sums.
\end{notation}
\section{Proof of Theorem \ref{thm:main3}}\label{sec:proof1}
We start by proving that the canonical map \eqref{eq:canonical-new} is injective. As proved in \cite[Thm.~4.4]{ANT}, every short exact sequence of dg categories $0 \to \cA \to \cB \to \cC \to 0$, with $\cA$ smooth proper and $\cB$ proper, is necessarily split. This implies that the second Grothendieck ring $K_0^{(2)}(k)$ agrees with the Grothendieck ring~$\cP\cT(k)$ of smooth proper pretriangulated dg categories introduced in \cite[\S5]{BLL}; consult \cite[Thm.~1.1]{ANT} for details. Recall from \cite[Page 899]{ANT} that we have a ring homomorphism:
\begin{eqnarray*}
\cP\cT(k) \too K_0(\NChow(k)) && [\cA] \mapsto [U(\cA)]\,.
\end{eqnarray*}
By precomposing  it with the ring isomorphism $K_0^{(2)}(k)\simeq \cP\cT(k)$ and with \eqref{eq:canonical-new}, we hence obtain the following composition:
\begin{eqnarray}\label{eq:canonical222}
\mathrm{Br}(k) \stackrel{\eqref{eq:canonical-new}}{\too} K_0^{(2)}(k) \simeq \cP\cT(k) \too K_0(\NChow(k)) && [A]\mapsto [U(A)]\,.
\end{eqnarray}
In what follows, we prove that the composition \eqref{eq:canonical222} is injective; note that this implies that \eqref{eq:canonical-new} is also injective. We start by recalling the following results:
\begin{proposition}{(\cite[Prop.~6.2(i)]{ANT})}\label{prop:index1}
Let $k$ be a field, $A$ and $B$ two central simple $k$-algebras, and $R$ a commutative ring of positive prime characteristic\footnote{Recall that a commutative ring $R$ has characteristic zero, resp. positive prime characteristic $p>0$, if the kernel of the unique ring homomorphism $\bbZ \to R$ is $\{0\}$, resp. $p\bbZ$.} $p>0$. If $p\mid\mathrm{ind}(A^\op \otimes B)$, then $U(A)_R \not\simeq U(B)_R$ in $\NChow(k)_R$. Moreover, we have 
\begin{equation}\label{eq:vanishing}
\Hom_{\NNum(k)_R}(U(A)_R, U(B)_R)=\Hom_{\NNum(k)_R}(U(B)_R, U(A)_R)=0\,.
\end{equation}
\end{proposition}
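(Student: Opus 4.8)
The plan is to combine the explicit description \eqref{eq:iso-last} of the morphisms of $\NChow(k)_R$ with base change to a splitting field. Since $A$ and $B$ are central simple $k$-algebras, so is $C:=A^\op\otimes_k B$; moreover $A$ and $B$ are $k$-flat, so $A^\op\otimes^{\bf L}_k B=C$ and \eqref{eq:iso-last} specializes to $\Hom_{\NChow(k)_R}(U(A)_R,U(B)_R)\simeq K_0(C)_R$, with composition corresponding to the (derived) tensor product of bimodules and $\mathrm{id}_{U(A)_R}$ to the class of the diagonal bimodule. As $C$ is simple Artinian we have $K_0(C)\simeq\bbZ$, generated by the class $[S_C]$ of the simple $C$-module; writing $C\simeq M_r(D)$ with $D$ a central division $k$-algebra one has $\dim_k S_C=\deg(C)\cdot\mathrm{ind}(A^\op\otimes B)$, so that $K_0(C)_R\simeq R$.

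Next I would fix a finite field extension $k'/k$ splitting both $A$ and $B$ (hence also $C$) and consider the base-change $\otimes$-functor $-\otimes_k k'\colon\NChow(k)_R\to\NChow(k')_R$, which sends $U(A)_R$ and $U(B)_R$ to $U(A\otimes_k k')_R$ and $U(B\otimes_k k')_R$. Since $A\otimes_k k'$ and $B\otimes_k k'$ are matrix algebras over $k'$, both of these objects are Morita equivalent to the unit $\mathbf 1=U(k')_R$. On $\Hom$-groups the functor realizes, via \eqref{eq:iso-last}, the base-change homomorphism $K_0(C)_R\to K_0(C\otimes_k k')_R$, and after the Morita identifications the target becomes $\End_{\NChow(k')_R}(\mathbf 1)=K_0(k')_R\simeq R$. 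The key point is that this composite $K_0(C)_R\to R$ carries the generator $[S_C]$ to $\mathrm{ind}(A^\op\otimes B)\cdot 1_R$: indeed $S_C\otimes_k k'$ has $k'$-dimension $\deg(C)\cdot\mathrm{ind}(A^\op\otimes B)$ whereas the simple $(C\otimes_k k')$-module has $k'$-dimension $\deg(C)$, so $[S_C]$ maps to $\mathrm{ind}(A^\op\otimes B)$ times the class of that simple module, which under Morita equivalence corresponds to $\mathrm{id}_{\mathbf 1}=1_R$. Since $\mathrm{char}(R)=p$ and $p\mid\mathrm{ind}(A^\op\otimes B)$, we get $\mathrm{ind}(A^\op\otimes B)\cdot 1_R=0$, so this composite is the zero map; equivalently, $-\otimes_k k'$ annihilates every morphism $U(A)_R\to U(B)_R$.

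The two assertions then follow formally. If $U(A)_R\simeq U(B)_R$ in $\NChow(k)_R$, applying $-\otimes_k k'$ and composing with the above Morita isomorphisms would yield an automorphism of $\mathbf 1$ in $\NChow(k')_R$ equal to $0\in R$; but $0$ is not invertible, since $R\neq 0$ (as $\mathrm{char}(R)=p$), a contradiction, so $U(A)_R\not\simeq U(B)_R$. For \eqref{eq:vanishing}, take $f\in\Hom_{\NChow(k)_R}(U(A)_R,U(B)_R)$ and an arbitrary $g\in\Hom_{\NChow(k)_R}(U(B)_R,U(A)_R)$. Because a $\otimes$-functor between rigid symmetric monoidal categories preserves the categorical trace of an endomorphism and $-\otimes_k k'$ is the identity on $\End(\mathbf 1)=R$, we have $\mathrm{tr}(g\circ f)=\mathrm{tr}((g\otimes_k k')\circ(f\otimes_k k'))$; but $f\otimes_k k'=0$ by the previous paragraph, hence $\mathrm{tr}(g\circ f)=0$. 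As $g$ is arbitrary, $f$ lies in the $\cN$-ideal, so $\Hom_{\NNum(k)_R}(U(A)_R,U(B)_R)=0$; interchanging $A$ and $B$, and using $\mathrm{ind}(B^\op\otimes A)=\mathrm{ind}((A^\op\otimes B)^\op)=\mathrm{ind}(A^\op\otimes B)$, gives $\Hom_{\NNum(k)_R}(U(B)_R,U(A)_R)=0$ as well.

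The main obstacle, and the only non-formal step, is the computation in the middle paragraph: one must check that the base-change and Morita isomorphisms of noncommutative Chow motives induce, under \eqref{eq:iso-last}, the expected base-change and Morita maps on the relevant $K_0$-groups, so that the composite $\Hom_{\NChow(k)_R}(U(A)_R,U(B)_R)\to\End_{\NChow(k')_R}(\mathbf 1)$ is the reduced-dimension homomorphism $[M]\mapsto\dim_k(M)/\deg(C)$, which sends $[S_C]$ to $\mathrm{ind}(A^\op\otimes B)$. Granting this — together with the standard facts that $\otimes$-functors preserve traces and act as the identity on $\End(\mathbf 1)=R$ — the remainder is bookkeeping.
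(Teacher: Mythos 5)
Your proposal is correct, but note that the paper contains no proof of Proposition \ref{prop:index1} to compare against: the statement is imported verbatim from \cite[Prop.~6.2(i)]{ANT}. There the argument stays over $k$: via \eqref{eq:iso-last} one identifies $\Hom(U(A),U(B))\simeq K_0(A^\op\otimes B)\simeq \bbZ$ and computes that the composition pairing with $K_0(B^\op\otimes A)\simeq \bbZ$ is given by multiplication by $\mathrm{ind}(A^\op\otimes B)$, after which reduction modulo $p$ yields both assertions. You reach the same divisibility by base-changing along a finite splitting field $k'/k$ and counting $k'$-dimensions, and this alternative route works: the count $[S_C]\mapsto \mathrm{ind}(A^\op\otimes B)\cdot 1_R=0$ is right; a zero morphism cannot be an isomorphism because $\End_{\NChow(k')_R}({\bf 1})\simeq R\neq 0$; and since a symmetric monoidal functor between rigid categories preserves traces while the induced unital $R$-linear map on $\End({\bf 1})=R$ is the identity, every $f\colon U(A)_R\to U(B)_R$ lies in the ideal $\cN$, which gives \eqref{eq:vanishing} (the swap $B^\op\otimes A\simeq (A^\op\otimes B)^\op$ handles the other Hom-group, and idempotent completion does not change Hom-groups between the images of $U(A)_R$ and $U(B)_R$). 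The inputs you explicitly take for granted --- the existence of an $R$-linear symmetric monoidal base-change functor $-\otimes_k k'\colon \NChow(k)_R\to \NChow(k')_R$ compatible with \eqref{eq:iso-last} and with Morita equivalences --- are standard and available in the cited literature (\eg\ \cite{book,JIMJ}), so I would not count them as a gap. The trade-off is that the internal pairing computation of \cite{ANT} avoids base-change functoriality altogether, whereas your splitting-field argument replaces the index computation over $k$ by an elementary dimension count over $k'$.
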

\begin{proposition}{(\cite[Prop.~6.11]{ANT})}\label{prop:Br-graded1}
Let $k$ be a field and $R$ a field of positive characteristic $p>0$. In this case, the category $\mathrm{CSA}(k)^\oplus_R$ is equivalent to the category of $\mathrm{Br}(k)\{p\}$-graded finite dimensional $R$-vector spaces.
\end{proposition}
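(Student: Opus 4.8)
The plan is to construct an equivalence which sends, for a central simple $k$-algebra $A$, the object $U(A)_R$ to the one-dimensional $R$-vector space placed in degree $[A]_p\in\mathrm{Br}(k)\{p\}$ --- where $[A]_p$ denotes the image of the Brauer class of $A$ under the projection onto the $p$-primary component --- and which is then extended additively to $\mathrm{CSA}(k)^\oplus_R$. Since every $\mathrm{Br}(k)\{p\}$-graded finite-dimensional $R$-vector space is a finite direct sum of one-dimensional homogeneous ones, essential surjectivity will be automatic, and the real content is the computation of the $\Hom$-groups of $\mathrm{CSA}(k)_R$. I would split this into the two cases $[A]_p\neq[B]_p$ and $[A]_p=[B]_p$.

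First I would record two preliminary facts. (i) Each $U(A)_R$ is $\otimes$-invertible in $\NChow(k)_R$, hence in $\NNum(k)_R$: as $A^\op\otimes^{\bf L}A$ is a matrix $k$-algebra, \eqref{eq:iso-last} and Remark \ref{rk:Dg} give $U(A)_R\otimes U(A^\op)_R\simeq U(A^\op\otimes^{\bf L}A)_R\simeq U(k)_R$. Because $\cN$ is a $\otimes$-ideal and $\mathrm{tr}(\mathrm{id}_{U(k)_R})=\mathrm{id}_{U(k)_R}$, the group $\cN(U(A)_R,U(A)_R)$ vanishes --- it is carried, by tensoring with the inverse, isomorphically onto $\cN(U(k)_R,U(k)_R)=0$ --- so that $\End_{\NNum(k)_R}(U(A)_R)\simeq\End_{\NChow(k)_R}(U(A)_R)\simeq K_0(k)_R=R$. (ii) Since the period and the index of a central simple $k$-algebra have the same prime divisors, $p\mid\mathrm{ind}(A^\op\otimes^{\bf L}B)$ if and only if the $p$-primary components of the Brauer classes of $A$ and $B$ differ, i.e.\ $[A]_p\neq[B]_p$. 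Combining (ii) with Proposition \ref{prop:index1} (applied over the coefficient ring $R$, which has prime characteristic $p$) settles the first case: if $[A]_p\neq[B]_p$ then $\Hom_{\NNum(k)_R}(U(A)_R,U(B)_R)=0$, matching the absence of morphisms between vector spaces in distinct degrees.

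For the second case, suppose $[A]_p=[B]_p$ and put $C:=A^\op\otimes^{\bf L}B$; then $d:=\mathrm{ind}(C)$ is prime to $p$ by (ii), hence invertible in $R$. The key step is to prove $U(C)_R\simeq U(k)_R$ in $\NChow(k)_R$. Writing $C\simeq M_m(D)$ with $D$ a division $k$-algebra, and letting $S$, $S^\op$ be the simple modules over $C$, $C^\op$, the generators $[S]\in K_0(C)_R=\Hom_{\NChow(k)_R}(U(k)_R,U(C)_R)$ and $[S^\op]\in K_0(C^\op)_R=\Hom_{\NChow(k)_R}(U(C)_R,U(k)_R)$ satisfy, via the bimodule identification $S^\op\otimes_C S\simeq D$, the relation $[S^\op]\circ[S]=[D]=d^2$ in $\End_{\NChow(k)_R}(U(k)_R)=K_0(k)_R=R$. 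Rescaling $[S^\op]$ by $d^{-2}$ produces morphisms whose composite is $\mathrm{id}_{U(k)_R}$; the reverse composite is then an idempotent of $\End_{\NNum(k)_R}(U(C)_R)=R$ which is nonzero (it conjugates to $\mathrm{id}_{U(k)_R}$), hence equals $1$, so the two morphisms are mutually inverse isomorphisms. Tensoring $U(C)_R\simeq U(k)_R$ with the invertible object $U(A)_R$, and using that $A\otimes^{\bf L}A^\op\otimes^{\bf L}B$ is Morita equivalent to $B$, gives $U(B)_R\simeq U(A)_R$; hence $\Hom_{\NNum(k)_R}(U(A)_R,U(B)_R)\simeq\End_{\NNum(k)_R}(U(A)_R)\simeq R$ by (i). Assembling the two cases shows that $\mathrm{CSA}(k)_R$ is equivalent to the full subcategory of $\mathrm{Br}(k)\{p\}$-graded finite-dimensional $R$-vector spaces spanned by the homogeneous lines; closing up under finite direct sums on both sides yields the asserted equivalence.

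I expect the main obstacle to be the second case --- specifically, pinning down that $[S^\op]\circ[S]$ equals $d^2$ (a power of the index) through the computation $S^\op\otimes_C S\simeq D$. This is exactly where the positive characteristic hypothesis is used \emph{constructively}, namely through invertibility of $d$ in $R$ (the other use being via Proposition \ref{prop:index1}); without it $U(C)_R$ need not be trivial. A subsidiary, routine point is to check the coherence needed to upgrade these objectwise identifications into a genuine $R$-linear functor, which is harmless since all the endomorphism rings in sight are canonically $R$.
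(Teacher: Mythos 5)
Your proof is correct and follows essentially the same route as the source of this statement: the paper itself imports it without proof from \cite[Prop.~6.11]{ANT}, and the argument there rests on exactly the two Hom-computations you carry out --- vanishing of $\Hom_{\NNum(k)_R}(U(A)_R,U(B)_R)$ when $p\mid \mathrm{ind}(A^\op\otimes^{\bf L}B)$ (Proposition \ref{prop:index1} combined with the period--index equality of prime divisors) and $U(A)_R\simeq U(B)_R$ when the index is prime to $p$, obtained by the same simple-module generator and rescaling-by-$d^{-2}$ trick --- followed by the routine assembly (via $\oplus_\alpha\Hom(U(A_\alpha)_R,-)$) into $\mathrm{Br}(k)\{p\}$-graded finite dimensional $R$-vector spaces. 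The only slip is cosmetic: the phrase ``$\mathrm{tr}(\mathrm{id}_{U(k)_R})=\mathrm{id}_{U(k)_R}$'' should read that the categorical trace of $r\cdot\mathrm{id}_{U(k)_R}$ is $r\in R$, whence $\cN(U(k)_R,U(k)_R)=0$, which is what your step (i) actually uses.
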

Let $A$ and $B$ be two central simple $k$-algebras such that $[A]\neq [B]$ in $\mathrm{Br}(k)$. We need to show that $[U(A)]\neq [U(B)]$ in the Grothendieck ring $K_0(\NChow(k))$. Recall that $\mathrm{ind}(A^\op \otimes B) =1$ if and only if $[A]=[B]$. Therefore, let us choose a prime number $p$ such that $p\mid \mathrm{ind}(A^\op \otimes B)$. Thanks to Proposition \ref{prop:index1}, we have $U(A)_{\bbF_p}\not\simeq U(B)_{\bbF_p}$ in $\NChow(k)_{\bbF_p}$. Note that since $\mathrm{End}_{\NChow(k)_{\bbF_p}}(U(k)_{\bbF_p})\simeq \bbF_p$, Remark \ref{rk:Dg} yields the following ring isomorphisms (in $\NChow(k)_{\bbF_p}$ and $\NNum(k)_{\bbF_p}$):
\begin{equation}\label{eq:identifications}
\mathrm{End}(U(A)_{\bbF_p}) \simeq \mathrm{End}(U(B)_{\bbF_p})\simeq \mathrm{End}(U(k)_{\bbF_p})\simeq \bbF_p\,.
\end{equation}
By combining \eqref{eq:identifications} with \eqref{eq:vanishing}, we conclude that $U(A)_{\bbF_p}\not\simeq U(B)_{\bbF_p}$ in $\NNum(k)_{\bbF_p}$. 

Now, let us assume by absurd that $[U(A)]=[U(B)]$ in the Grothendieck ring $K_0(\NChow(k))$. This is equivalent to the following condition:
\begin{equation}\label{eq:cond-last1}
\exists\, N\!\!M \in \NChow(k)\,\,\mathrm{such}\,\,\mathrm{that}\,\, U(A) \oplus N\!\!M \simeq U(B) \oplus N\!\!M\,.
\end{equation}
Thanks to Lemma \ref{lem:key11} below, if condition \eqref{eq:cond-last1} holds, then there exist integers $n, m \geq 0$ and a noncommutative numerical motive $N\!\!M'$~such~that 
\begin{equation}\label{eq:iso-global1}
\oplus^{n+1}_{i=1} U(A)_{\bbF_p} \oplus \oplus^m_{j=1} U(B)_{\bbF_p} \oplus N\!\!M' \simeq \oplus^n_{i=1} U(A)_{\bbF_p} \oplus \oplus^{m+1}_{j=1} U(B)_{\bbF_p} \oplus N\!\!M'
\end{equation}
in $\NNum(k)_{\bbF_p}$. Moreover, $N\!\!M'$ does not contains $U(A)_{\bbF_p}$ neither $U(B)_{\bbF_p}$ as direct summands. Note that the composition bilinear pairing (in $\NNum(k)_{\bbF_p}$)
\begin{equation}\label{eq:pairing-11}
\Hom(U(A)_{\bbF_p}, N\!\!M') \times \Hom(N\!\!M', U(A)_{\bbF_p}) \too \End(U(A)_{\bbF_p})
\end{equation}
is zero; similarly with $U(A)_{\bbF_p}$ replaced by $U(B)_{\bbF_p}$. This follows from the fact that the right-hand side~of \eqref{eq:pairing-11} is isomorphic to $\bbF_p$, from the fact that the category $\NNum(k)_{\bbF_p}$ is $\bbF_p$-linear, and from the fact that the noncommutative numerical motive $N\!\!M'$ does not contains $U(A)_{\bbF_p}$ as a direct summand. The following composition bilinear pairing (in $\NNum(k)_{\bbF_p}$)
\begin{equation}\label{eq:pairing-22}
\Hom(U(A)_{\bbF_p}, N\!\!M')\times \Hom(N\!\!M', U(B)_{\bbF_p})\too \Hom(U(A)_{\bbF_p}, U(B)_{\bbF_p})
\end{equation}
is also zero; similarly with $U(A)_{\bbF_p}$ and $U(B)_{\bbF_p}$ replaced by $U(B)_{\bbF_p}$ and $U(A)_{\bbF_p}$, respectively. This follows from the fact that the right-hand side of \eqref{eq:pairing-22} is zero; see Proposition \ref{prop:index1}. Note that the triviality of the composition bilinear pairings \eqref{eq:pairing-11}-\eqref{eq:pairing-22} implies that the above isomorphism \eqref{eq:iso-global1} restricts to an isomorphism
$$
U(A)_{\bbF_p}\oplus \oplus^n_{i=1} U(A)_{\bbF_p} \oplus \oplus^m_{j=1} U(B)_{\bbF_p} \simeq U(B)_{\bbF_p}\oplus \oplus^n_{i=1} U(A)_{\bbF_p} \oplus \oplus^m_{j=1} U(B)_{\bbF_p}
$$
in the subcategory $\mathrm{CSA}(k)_{\bbF_p}^\oplus \subset \NNum(k)_{\bbF_p}$. Since $\mathrm{CSA}(k)_{\bbF_p}^\oplus$ is equivalent to the category of $\mathrm{Br}(k)\{p\}$-graded finite dimensional $\bbF_p$-vector spaces (see Proposition \ref{prop:Br-graded1}), it satisfies the cancellation property with respect to direct sums. Consequently, we conclude from the preceding isomorphism that $U(A)_{\bbF_p}\simeq U(B)_{\bbF_p}$ in $\NNum(k)_{\bbF_p}$, which is a contradiction. This shows that the above composition \eqref{eq:canonical222} is injective.
\begin{lemma}\label{lem:key11}
If the above condition \eqref{eq:cond-last1} holds, then there exist integers $n,m \geq 0$ and a noncommutative numerical motive $N\!\!M' \in \NNum(k)_{\bbF_p}$ such that:
\begin{itemize}
\item[(i)] We have $N\!\!M_{\bbF_p}\simeq \oplus^n_{i=1} U(A)_{\bbF_p} \oplus \oplus _{j=1}^m U(B)_{\bbF_p} \oplus N\!\!M'$ in $\NNum(k)_{\bbF_p}$.
\item[(ii)] The noncommutative numerical motive $N\!\!M'$ does not contains $U(A)_{\bbF_p}$ neither $U(B)_{\bbF_p}$ as direct summands.
\end{itemize}
\end{lemma}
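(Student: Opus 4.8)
The plan is to produce $N\!\!M'$ by peeling off from $N\!\!M_{\bbF_p}$, one at a time, the direct summands isomorphic to $U(A)_{\bbF_p}$ and then those isomorphic to $U(B)_{\bbF_p}$; the whole content of the lemma is that this peeling terminates. Note that condition \eqref{eq:cond-last1} is not actually needed to prove the lemma itself — it enters only afterwards, when one substitutes the resulting decomposition of $N\!\!M_{\bbF_p}$ into the isomorphism $U(A)_{\bbF_p}\oplus N\!\!M_{\bbF_p}\simeq U(B)_{\bbF_p}\oplus N\!\!M_{\bbF_p}$ (obtained from \eqref{eq:cond-last1} by changing coefficients to $\bbF_p$ and passing to $\NNum(k)_{\bbF_p}$) in order to get \eqref{eq:iso-global1}.

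The key input I would invoke is that $\NNum(k)_{\bbF_p}$ is enriched over \emph{finite-dimensional} $\bbF_p$-vector spaces. On generators this amounts to the assertion that, for smooth proper dg categories $\cC$ and $\cD$, the quotient of $K_0(\cC^\op\otimes^{\bf L}\cD)_{\bbF_p}$ by the $\cN$-ideal is finite-dimensional over $\bbF_p$ — one way to see this is that the bilinear pairing given by composition followed by the categorical trace can be computed in terms of the (finite-dimensional) Hochschild homology of the smooth proper dg categories involved, so its radical $\cN$ has finite codimension — and this property is inherited by the idempotent completion; see \cite{book}. In particular $\Hom_{\NNum(k)_{\bbF_p}}(U(A)_{\bbF_p},N\!\!M_{\bbF_p})$ and $\Hom_{\NNum(k)_{\bbF_p}}(U(B)_{\bbF_p},N\!\!M_{\bbF_p})$ are finite-dimensional over $\bbF_p$.

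Granting this, the argument is formal. By \eqref{eq:identifications} we have $\End_{\NNum(k)_{\bbF_p}}(U(A)_{\bbF_p})\simeq\bbF_p$, a field, so $U(A)_{\bbF_p}$ is indecomposable and, whenever $\oplus^r_{i=1}U(A)_{\bbF_p}$ is a direct summand of an object $X$ of $\NNum(k)_{\bbF_p}$, one has $\dim_{\bbF_p}\Hom(U(A)_{\bbF_p},X)\geq r$. Hence the set of integers $r\geq 0$ for which $\oplus^r_{i=1}U(A)_{\bbF_p}$ is a direct summand of $N\!\!M_{\bbF_p}$ is finite and nonempty; I would let $n$ be its maximum and, using that $\NNum(k)_{\bbF_p}$ is idempotent complete, write $N\!\!M_{\bbF_p}\simeq\oplus^n_{i=1}U(A)_{\bbF_p}\oplus P$ with $P\in\NNum(k)_{\bbF_p}$. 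By maximality of $n$, the object $P$ has no direct summand isomorphic to $U(A)_{\bbF_p}$. Applying the same reasoning with $U(B)_{\bbF_p}$ and $P$ in place of $U(A)_{\bbF_p}$ and $N\!\!M_{\bbF_p}$ — here $\Hom_{\NNum(k)_{\bbF_p}}(U(B)_{\bbF_p},P)$ is a direct summand of the finite-dimensional space $\Hom_{\NNum(k)_{\bbF_p}}(U(B)_{\bbF_p},N\!\!M_{\bbF_p})$ — one writes $P\simeq\oplus^m_{j=1}U(B)_{\bbF_p}\oplus N\!\!M'$ with $m$ maximal, so that $N\!\!M'$ has no direct summand isomorphic to $U(B)_{\bbF_p}$. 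Finally $N\!\!M'$ has no direct summand isomorphic to $U(A)_{\bbF_p}$ either, since such a summand would exhibit $U(A)_{\bbF_p}$ as a direct summand of $P\simeq\oplus^m_{j=1}U(B)_{\bbF_p}\oplus N\!\!M'$, contradicting the previous step. Since $N\!\!M'$ is a direct summand of $N\!\!M_{\bbF_p}$ it is an object of $\NNum(k)_{\bbF_p}$, and the isomorphism $N\!\!M_{\bbF_p}\simeq\oplus^n_{i=1}U(A)_{\bbF_p}\oplus\oplus^m_{j=1}U(B)_{\bbF_p}\oplus N\!\!M'$ together with the two non-containment statements is precisely (i) and (ii).

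The one genuine obstacle is the finite-dimensionality statement of the second paragraph; granting it, everything reduces to the bookkeeping of the third paragraph (equivalently, $\NNum(k)_{\bbF_p}$ is then idempotent complete with finite-dimensional Hom-spaces over a field, hence Krull--Schmidt, and the lemma follows at once from uniqueness of the decomposition into indecomposables). I would take some care with the very last step, where the fact that $N\!\!M'$ loses its $U(A)_{\bbF_p}$-summands and not just its $U(B)_{\bbF_p}$-summands is what makes the subsequent triviality-of-pairings argument go through.
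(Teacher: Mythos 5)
Your splitting bookkeeping (peel off a maximal string of $U(A)_{\bbF_p}$'s, then of $U(B)_{\bbF_p}$'s, and check that the leftover piece has no summand of either type) is fine and agrees with the splitting step of the paper's proof. The genuine gap is the input you use to make the peeling terminate: the assertion that the Hom-spaces of $\NNum(k)_{\bbF_p}$ — in particular $\Hom(U(A)_{\bbF_p},N\!\!M_{\bbF_p})$ for an \emph{arbitrary} motive $N\!\!M$ — are finite-dimensional over $\bbF_p$. The justification you sketch does not prove this. The trace pairing on $\Hom_{\NChow(k)_{\bbF_p}}(U(\cC)_{\bbF_p},U(\cD)_{\bbF_p})\simeq K_0(\cC^\op\otimes^{\bf L}\cD)_{\bbF_p}$ takes values in $K_0(k)_{\bbF_p}\simeq \bbF_p$, and expressing traces as Euler characteristics of Hochschild homology does not bound the $\bbF_p$-codimension of the radical: when $\mathrm{char}(k)=0$ the $k$-vector spaces $\HH_\ast$ carry no mod-$p$ information beyond the integral pairing, and when $k$ is an infinite field of characteristic $p$ an $\bbF_p$-subquotient of a finite-dimensional $k$-vector space can be infinite-dimensional over $\bbF_p$. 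What you actually need is that the \emph{integral} numerical Grothendieck group $K_0(\cC^\op\otimes^{\bf L}\cD)$ modulo the kernel of the Euler pairing is finitely generated; that is a substantial theorem (requiring a Weil-type cohomology, a Riemann--Roch argument and a lattice argument, and delicate in positive characteristic), it is not in the cited reference, and it is precisely the kind of heavy input — in the same circle as the semi-simplicity of $\NNum$ — that this paper is explicitly written to avoid. Note also that Proposition \ref{prop:Br-graded1} only controls Hom's between objects of $\mathrm{CSA}(k)^\oplus_{\bbF_p}$, not $\Hom(U(A)_{\bbF_p},N\!\!M_{\bbF_p})$, so it cannot substitute for your claim; and the same lemma is reused verbatim over a general commutative ring with $K_0(k)_\bbQ\simeq\bbQ$ (Lemma \ref{lem:key111}), where no such finiteness is available.

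The paper obtains termination much more cheaply: if one could split off infinitely many copies of $U(A)_{\bbF_p}$, a swindle-type isomorphism would force $[U(A)_{\bbF_p}]=0$ in $K_0(\NNum(k)_{\bbF_p})$; but the Euler-characteristic ring homomorphism $\chi\colon K_0(\NNum(k)_{\bbF_p})\to \End_{\NNum(k)_{\bbF_p}}(U(k)_{\bbF_p})\simeq\bbF_p$, combined with the ring isomorphisms \eqref{eq:identifications}, gives $\chi([U(A)_{\bbF_p}])=\chi([U(k)_{\bbF_p}])=1\neq 0$, a contradiction. This uses only rigidity, additivity and \eqref{eq:identifications}, with no finiteness of Hom-spaces. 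To repair your argument, either prove the finite-dimensionality claim honestly in the stated generality, or replace the Krull--Schmidt-style termination step by an Euler-characteristic argument of this type.
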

\begin{proof}
Recall that the category $\NNum(k)_{\bbF_p}$ is idempotent complete. Therefore, by inductively splitting the (possible) direct summands $U(A)_{\bbF_p}$ and $U(B)_{\bbF_p}$ of the noncommutative numerical motive $N\!\!M_{\bbF_p}$, we obtain an isomorphism
$$ N\!\!M_{\bbF_p} \simeq U(A)_{\bbF_p} \oplus \cdots \oplus U(A)_{\bbF_p} \oplus U(B)_{\bbF_p} \oplus \cdots \oplus U(B)_{\bbF_p} \oplus N\!\!M'$$
in $\NNum(k)_{\bbF_p}$, with $N\!\!M'$ satisfying condition (ii). We claim that the number of copies of $U(A)_{\bbF_p}$ and $U(B)_{\bbF_p}$ is finite; note that this concludes the proof. We will focus ourselves in the case $U(A)_{\bbF_p}$; the proof of the case $U(B)_{\bbF_p}$ is similar. Suppose by absurd that the number of copies of $U(A)_{\bbF_p}$ is infinite. In this case, the natural (shift) isomorphism $U(A)_{\bbF_p} \oplus \oplus^\infty_{i=1} U(A)_{\bbF_p} \simeq \oplus^\infty_{i=1} U(A)_{\bbF_p}$ would allows us to conclude that $[U(A)_{\bbF_p}]=0$ in $K_0(\NNum(k)_{\bbF_p})$. By construction, the category $\NNum(k)_{\bbF_p}$ is additive and rigid symmetric monoidal. Therefore, the classical Euler characteristic construction\footnote{Let $(\cC,\otimes, {\bf 1})$ be a rigid symmetric monoidal category. Given a dualizable object $a \in \cC$, recall that its {\em Euler characteristic} $\chi(a)$ is defined as ${\bf 1} \stackrel{\mathrm{co}}{\to} a^\vee \otimes a \simeq a \otimes a^\vee \stackrel{\mathrm{ev}}{\to} {\bf 1}$, where $\mathrm{co}$, resp. $\mathrm{ev}$, denotes the coevaluation, resp. evaluation, morphism.} gives rise to a ring homomorphism:
\begin{equation*}
\chi\colon K_0(\NNum(k)_{\bbF_p}) \too \mathrm{End}_{\NNum(k)_{\bbF_p}}(U(k)_{\bbF_p})\,.
\end{equation*}
Consequently, since $[U(A)_{\bbF_p}]=0$, we would conclude that $\chi([U(A)_{\bbF_p}])=0$. Thanks to the above ring isomorphisms \eqref{eq:identifications}, the equality $\chi([U(A)_{\bbF_p}])=\chi([U(k)_{\bbF_p}])$ holds in $\bbF_p$. Using the fact that $\chi([U(k)_{\bbF_p}])=1 \in \bbF_p$, we hence obtain a contradiction. This finishes the proof.
\end{proof}

We now have all the ingredients necessary for the conclusion of the proof of Theorem \ref{thm:main3}. Since by assumption the quasi-compact quasi-separated $k$-scheme $X$ is regular and integral, we can consider the canonical morphism $f\colon \mathrm{Spec}(k(X))\to X$ and the associated commutative square:
\begin{equation*}\label{eq:square-last}
\xymatrix{
H^2_{\mathrm{et}}(X,\bbG_m)\simeq \mathrm{dBr}(X) \ar[rr]^-{\eqref{eq:canonical-1}} \ar[d]_-{f^\ast} && K_0^{(2)}(X) \ar[d]^-{f^\ast} \\
H^2_{\mathrm{et}}(k(X),\bbG_m)_{\mathrm{tor}}\simeq \mathrm{Br}(k(X)) \ar[rr]_-{\eqref{eq:canonical-1}} && K_0^{(2)}(k(X))\,.
}
\end{equation*}
As proved in \cite[Cor.~1.10]{Grothendieck2}, the left-hand side vertical homomorphism is injective. Therefore, we conclude that the injectivity of \eqref{eq:canonical-1} follows from the injectivity of the canonical map \eqref{eq:canonical-new} (with $k$ replaced by $k(X)$).

\section{Proof of Theorem \ref{thm:five}}
The proof is divided into two steps: in the first step we consider the ``local'' case and in the second step the ``global'' case.
\subsection*{Step I}
Let $k$ be a commutative ring. In the particular case where $X=\mathrm{Spec}(k)$, let us write the canonical map \eqref{eq:canonical-1} as follows:
\begin{equation}\label{eq:canonical1}
\mathrm{dBr}(k) \too K_0^{(2)}(k)\,.
\end{equation}
The following ``local'' result is of independent interest:
\begin{theorem}\label{thm:main22}
Assume that $K_0(k)_\bbQ \simeq \bbQ$. Given dg Azumaya $k$-algebras $A$ and $B$ such that $A^\op \otimes^{\bf L}B$ is not Morita equivalent to an ordinary Azumaya $k$-algebra, the images of $[A]$ and $[B]$ under the canonical map \eqref{eq:canonical1} are different.
\end{theorem}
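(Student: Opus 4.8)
The plan is to run, over $\bbQ$-coefficients, the argument of \S\ref{sec:proof1} that establishes the injectivity of \eqref{eq:canonical-new}, with the finite-field coefficients $\bbF_p$ replaced by $\bbQ$ and the grading group $\mathrm{Br}(k)\{p\}$ replaced by $\mathrm{dBr}(k)/\mathrm{Br}(k)$.

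\textbf{Reduction.} I would put $C:=A^\op\otimes^{\bf L}B$, which is again a dg Azumaya $k$-algebra and, by hypothesis, is not Morita equivalent to an ordinary Azumaya $k$-algebra, i.e. $[C]\notin\mathrm{Br}(k)\subseteq\mathrm{dBr}(k)$. Since $A\otimes^{\bf L}A^\op$ is Morita equivalent to $k$ (Remark \ref{rk:Dg}), the image of $[A]$ under \eqref{eq:canonical1} is a unit of the ring $K_0^{(2)}(k)$, with inverse the image of $[A^\op]$, and \eqref{eq:canonical1} is multiplicative; hence the images of $[A]$ and $[B]$ coincide if and only if the image of $[C]$ equals $1=[k]$, and it suffices to exclude the latter. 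As in \S\ref{sec:proof1}, the assignment $[\cA]\mapsto[U(\cA)]$ yields a ring homomorphism $K_0^{(2)}(k)\to K_0(\NChow(k))$ (the functor $U(-)$ carries short exact sequences of smooth proper dg categories to direct sums), so I would assume, towards a contradiction, that $[U(C)]=[U(k)]$ in $K_0(\NChow(k))$, equivalently that there is $N\!\!M\in\NChow(k)$ with $U(C)\oplus N\!\!M\simeq U(k)\oplus N\!\!M$.

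\textbf{Passage to $\bbQ$-linear numerical motives.} Base-changing the coefficients to $\bbQ$ gives $U(C)_\bbQ\oplus N\!\!M_\bbQ\simeq U(k)_\bbQ\oplus N\!\!M_\bbQ$ in $\NChow(k)_\bbQ$. The standing hypothesis gives $\End_{\NChow(k)_\bbQ}(U(k)_\bbQ)=K_0(k)_\bbQ=\bbQ$, and the rationalization of Remark \ref{rk:Dg} gives $\End_{\NChow(k)_\bbQ}(U(C)_\bbQ)\simeq\bbQ$; moreover $\HH(C)\simeq k$ (a dg analogue of the separability of Azumaya algebras, again by Remark \ref{rk:Dg}), so that, as in the proof of Lemma \ref{lem:key11}, the noncommutative Euler characteristic of $U(C)_\bbQ$ equals that of $U(k)_\bbQ$, namely $1$. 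Passing to $\NNum(k)_\bbQ$, I would then argue exactly as in Lemma \ref{lem:key11} that $N\!\!M_\bbQ$ contains only finitely many copies of $U(C)_\bbQ$ and of $U(k)_\bbQ$ as direct summands — an infinite family would force the class of $U(C)_\bbQ$ (resp. $U(k)_\bbQ$) to vanish in $K_0(\NNum(k)_\bbQ)$, contradicting that its Euler characteristic is $1\neq0$ — obtaining $N\!\!M_\bbQ\simeq\oplus^{n}U(C)_\bbQ\oplus\oplus^{m}U(k)_\bbQ\oplus N\!\!M'$ in $\NNum(k)_\bbQ$ with $N\!\!M'$ containing neither as a summand, so that substitution yields
$$\oplus^{n+1}U(C)_\bbQ\oplus\oplus^{m}U(k)_\bbQ\oplus N\!\!M'\ \simeq\ \oplus^{n}U(C)_\bbQ\oplus\oplus^{m+1}U(k)_\bbQ\oplus N\!\!M'$$
in $\NNum(k)_\bbQ$.

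\textbf{Cancellation.} All composition bilinear pairings that factor through $N\!\!M'$ are then trivial: those with equal endpoints because $\End(U(C)_\bbQ)$ and $\End(U(k)_\bbQ)$ are fields while $N\!\!M'$ has no such summand, and the mixed ones because $\Hom_{\NNum(k)_\bbQ}(U(C)_\bbQ,U(k)_\bbQ)=\Hom_{\NNum(k)_\bbQ}(U(k)_\bbQ,U(C)_\bbQ)=0$. Consequently the displayed isomorphism restricts to the full subcategory of $\NNum(k)_\bbQ$ generated under finite direct sums by the objects $U(D)_\bbQ$ with $D$ a dg Azumaya $k$-algebra; that subcategory is — by the $\bbQ$-linear analogue of Proposition \ref{prop:Br-graded1} — equivalent to the category of $(\mathrm{dBr}(k)/\mathrm{Br}(k))$-graded finite-dimensional $\bbQ$-vector spaces, hence has the cancellation property. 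Cancelling $N\!\!M'$ leaves an isomorphism between $\oplus^{n+1}U(C)_\bbQ\oplus\oplus^{m}U(k)_\bbQ$ and $\oplus^{n}U(C)_\bbQ\oplus\oplus^{m+1}U(k)_\bbQ$; but $[C]\notin\mathrm{Br}(k)$ places $U(C)_\bbQ$ and $U(k)_\bbQ$ in distinct graded pieces, so $U(C)_\bbQ\not\simeq U(k)_\bbQ$ and counting multiplicities forces $n+1=n$ — a contradiction, which finishes the proof.

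\textbf{Main obstacle.} Everything above is formal once one has the two $\bbQ$-linear inputs invoked in the last step, namely the counterparts of Propositions \ref{prop:index1} and \ref{prop:Br-graded1}: that $U(D)_\bbQ\not\simeq U(D')_\bbQ$ in $\NNum(k)_\bbQ$, with vanishing $\Hom$'s in both directions, whenever $D^\op\otimes^{\bf L}D'$ is not Morita equivalent to an ordinary Azumaya algebra, and the ensuing identification of the $U(D)_\bbQ$'s with a $(\mathrm{dBr}(k)/\mathrm{Br}(k))$-graded vector-space category. The "graded piece $=\bbQ$" half uses only $K_0(k)_\bbQ\simeq\bbQ$ together with Remark \ref{rk:Dg}; the delicate half is the vanishing $\Hom_{\NNum(k)_\bbQ}(U(k)_\bbQ,U(E)_\bbQ)=0$ for $E$ a dg Azumaya algebra with non-torsion Brauer class — equivalently, that every element of $K_0(E)_\bbQ$ is $\cN$-trivial. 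I expect this to follow from the degeneracy of the induced trace pairing $K_0(E^\op)_\bbQ\times K_0(E)_\bbQ\to K_0(k)_\bbQ=\bbQ$, reflecting the absence of a well-defined finite rank on perfect $E$-modules for a non-torsion twist (the rational vanishing of twisted $K_0$), together with the complementary collapse of torsion classes that pins the grading group down to $\mathrm{dBr}(k)/\mathrm{Br}(k)$; making both precise is where the real work lies.
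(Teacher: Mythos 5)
Your overall architecture matches the paper's: pass through $[\cA]\mapsto[U(\cA)]$ to $K_0(\NChow(k))$, rationalize, split off finitely many copies using the Euler-characteristic argument of Lemma \ref{lem:key11}, kill the pairings through the remainder, and conclude by cancellation in a semisimple-looking subcategory. The preliminary reduction to $C:=A^\op\otimes^{\bf L}B$ and comparison with $U(k)$ (using that the image of $[A]$ is a unit of $K_0^{(2)}(k)$) is fine, as is the finiteness step. But there is a genuine gap exactly where you flag your ``main obstacle'': the statement that $U(C)_\bbQ\not\simeq U(k)_\bbQ$ in $\NNum(k)_\bbQ$ together with the vanishing $\Hom_{\NNum(k)_\bbQ}(U(k)_\bbQ,U(C)_\bbQ)=\Hom_{\NNum(k)_\bbQ}(U(C)_\bbQ,U(k)_\bbQ)=0$ is the whole content of the theorem's non-formal input, and your proposal leaves it as an expectation (``degeneracy of the trace pairing'', ``rational vanishing of twisted $K_0$'') rather than a proof. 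In the paper this is supplied by Theorem \ref{thm:last}, i.e.\ the nontrivial result of \cite[Thm.~B.15]{Separable} that $U(C)_\bbQ\not\simeq U(k)_\bbQ$ already in $\NChow(k)_\bbQ$; Proposition \ref{prop:new11} then deduces the Hom-vanishing in $\NNum(k)_\bbQ$ purely formally, from the fact that $\End(U(C)_\bbQ)\simeq\End(U(k)_\bbQ)\simeq\bbQ$ is a field (so a nonzero composite $g\circ f$ would be invertible and force an isomorphism). Without an argument of this kind your proof does not close, since nothing you write rules out a nonzero morphism $U(k)_\bbQ\to U(C)_\bbQ$ surviving numerical equivalence.

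A second, smaller issue: your appeal to a ``$\bbQ$-linear analogue of Proposition \ref{prop:Br-graded1}'', identifying the additive hull of \emph{all} $U(D)_\bbQ$ ($D$ dg Azumaya) with $(\mathrm{dBr}(k)/\mathrm{Br}(k))$-graded $\bbQ$-vector spaces, is stronger than what is needed and is itself unproven: it would require, for instance, that Brauer-equivalent dg Azumaya algebras have isomorphic motives in $\NNum(k)_\bbQ$, plus the full Hom-vanishing pattern across all classes. The paper avoids this entirely by working only with the two objects at hand: the subcategory $\langle U(A)_\bbQ,U(B)_\bbQ\rangle^\oplus$ is identified, via $\Hom(U(A)_\bbQ\oplus U(B)_\bbQ,-)$ and the ring isomorphism $\End(U(A)_\bbQ\oplus U(B)_\bbQ)\simeq\bbQ\times\bbQ$ (which uses only \eqref{eq:isos-new} and Proposition \ref{prop:new11}), with $\mathrm{Vect}(\bbQ)\times\mathrm{Vect}(\bbQ)$, where cancellation is immediate. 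I would recommend adopting that local form of the cancellation step and, crucially, importing \cite[Thm.~B.15]{Separable} (or proving an equivalent nontriviality statement) to fill the central gap.
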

Recall that $K_0(k)\simeq \bbZ$ whenever $k$ is a local ring or a principal ideal domain.
\begin{corollary}
The restriction of the canonical map \eqref{eq:canonical1} to the torsion-free subgroup $H^1_{\mathrm{et}}(\mathrm{Spec}(k),\bbZ)$ is injective.
\end{corollary}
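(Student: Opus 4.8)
The plan is to deduce this Corollary from Theorem~\ref{thm:main22} by a short torsion argument, keeping the standing hypothesis $K_0(k)_{\bbQ}\simeq\bbQ$. Via the isomorphism \eqref{eq:bijection-key2}, the group $H^1_{\mathrm{et}}(\mathrm{Spec}(k),\bbZ)$ is a torsion-free subgroup of $\mathrm{dBr}(k)$. So I would fix two distinct classes $\alpha\neq\beta$ in this subgroup and, using the definition of $\mathrm{dBr}(k)$ as the set of Morita equivalence classes of (sheaves of) dg Azumaya $k$-algebras, choose dg Azumaya $k$-algebras $A$ and $B$ with $[A]=\alpha$ and $[B]=\beta$.

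The key point is that the dg Azumaya $k$-algebra $A^{\op}\otimes^{\bf L}B$ is not Morita equivalent to an ordinary Azumaya $k$-algebra. Indeed, since the group law of $\mathrm{dBr}(k)$ is induced by $\otimes^{\bf L}$ and the inverse of $[A]$ is $[A^{\op}]$, we have $[A^{\op}\otimes^{\bf L}B]=[B]-[A]=\beta-\alpha$ (additive notation), which again lies in the subgroup $H^1_{\mathrm{et}}(\mathrm{Spec}(k),\bbZ)$ and is nonzero; as that subgroup is torsion-free, $[A^{\op}\otimes^{\bf L}B]$ is of infinite order in $\mathrm{dBr}(k)$. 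On the other hand, under \eqref{eq:bijection-key2} the class of any ordinary Azumaya $k$-algebra belongs to $\mathrm{Br}(k)\subseteq\mathrm{Br}'(k)=H^2_{\mathrm{et}}(\mathrm{Spec}(k),\bbG_m)_{\mathrm{tor}}$, hence is a torsion element of $\mathrm{dBr}(k)$. A non-torsion class cannot equal a torsion one, so $A^{\op}\otimes^{\bf L}B$ is not Morita equivalent to any ordinary Azumaya $k$-algebra.

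With this in hand, Theorem~\ref{thm:main22} applies to the pair $(A,B)$ and shows that the images of $[A]=\alpha$ and $[B]=\beta$ under \eqref{eq:canonical1} are distinct; since $\alpha\neq\beta$ were arbitrary, the restriction of \eqref{eq:canonical1} to $H^1_{\mathrm{et}}(\mathrm{Spec}(k),\bbZ)$ is injective. I do not anticipate any genuine obstacle: the argument is entirely formal once Theorem~\ref{thm:main22} is available. The only point requiring a little care is the bookkeeping with \eqref{eq:bijection-key2} — that $H^1_{\mathrm{et}}(\mathrm{Spec}(k),\bbZ)$ really is a torsion-free subgroup of $\mathrm{dBr}(k)$, so that the difference of two distinct such classes stays in it and is of infinite order, and correspondingly that the classes coming from ordinary Azumaya algebras are all torsion — but all of this is recorded in the introduction.
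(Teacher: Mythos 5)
Your argument is correct and is exactly the intended deduction: the paper leaves this corollary without an explicit proof, but it follows from Theorem~\ref{thm:main22} precisely as you argue (compare the proof of Corollary~\ref{cor:(non)torsion}), using \eqref{eq:bijection-key2} to see that the nonzero difference class $[A^{\op}\otimes^{\bf L}B]$ lies in the torsion-free factor $H^1_{\mathrm{et}}(\mathrm{Spec}(k),\bbZ)$ while ordinary Azumaya classes are torsion. Your bookkeeping with the standing hypothesis $K_0(k)_{\bbQ}\simeq\bbQ$ and with the group law of $\mathrm{dBr}(k)$ is accurate, so nothing is missing.
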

\begin{example}[Nodal curve]
Let $F$ be a field and $k:=(F[x,y]/(y^2-x^3 +x^2))_{(x,y)}$ the local ring of the singular point of the nodal curve. As explained in \cite[Example~2.2]{Pic-Weibel}, we have $H^1_{\mathrm{et}}(\mathrm{Spec}(k),\bbZ)\simeq \bbZ$. 
\end{example}
\begin{corollary}\label{cor:last222}
Given a dg Azumaya $k$-algebra $A$ which is not Morita equivalent to an ordinary Azumaya $k$-algebra, the restriction of the canonical map \eqref{eq:canonical1} to the subgroup $\bbZ$ of $\mathrm{dBr}(k)$ generated by $[A]$ is injective.
\end{corollary}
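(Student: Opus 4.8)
The plan is to derive Corollary \ref{cor:last222} directly from Theorem \ref{thm:main22}. Let $A$ be a dg Azumaya $k$-algebra that is not Morita equivalent to an ordinary Azumaya $k$-algebra, and let $\mathbb{Z}\cdot[A]\subseteq \mathrm{dBr}(k)$ be the cyclic subgroup it generates. Since $k$ is local (or a PID) we have $K_0(k)\simeq\bbZ$, so $K_0(k)_\bbQ\simeq\bbQ$ and Theorem \ref{thm:main22} is available. To prove injectivity on $\bbZ\cdot[A]$ it suffices to show that whenever $n\neq m$ in $\bbZ$, the images of $n[A]=[A^{\otimes^{\bf L} n}]$ and $m[A]=[A^{\otimes^{\bf L} m}]$ under \eqref{eq:canonical1} are different (here $A^{\otimes^{\bf L} n}$ denotes an $n$-fold derived tensor power for $n\geq 0$, with negative powers handled via $A^\op$, and $[A]$ itself has infinite order in $\mathrm{dBr}(k)\simeq H^1_{\mathrm{et}}(\mathrm{Spec}(k),\bbZ)\times H^2_{\mathrm{et}}(\mathrm{Spec}(k),\bbG_m)$ by isomorphism \eqref{eq:bijection-key2} together with the hypothesis, so $n[A]\neq m[A]$ already in $\mathrm{dBr}(k)$).

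The key reduction is the following: setting $C:=A^{\otimes^{\bf L} n}$ and $D:=A^{\otimes^{\bf L} m}$, one must check that $C^\op\otimes^{\bf L} D$ is not Morita equivalent to an ordinary Azumaya $k$-algebra, so that Theorem \ref{thm:main22} applies and yields $[C]\neq[D]$ in $K_0^{(2)}(k)$. Now $C^\op\otimes^{\bf L} D$ is Morita equivalent to $A^{\otimes^{\bf L}(m-n)}$, which represents the class $(m-n)[A]$ in $\mathrm{dBr}(k)$. So I must verify: if $r:=m-n\neq 0$, then $r[A]$ is represented by a dg Azumaya algebra that is not Morita equivalent to an ordinary one. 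Under isomorphism \eqref{eq:bijection-key2}, the subgroup of $\mathrm{dBr}(k)$ corresponding to (Morita classes of) ordinary Azumaya algebras is $H^1_{\mathrm{et}}(\mathrm{Spec}(k),\bbZ)\times\mathrm{Br}(k)$, where $\mathrm{Br}(k)=H^2_{\mathrm{et}}(\mathrm{Spec}(k),\bbG_m)_{\mathrm{tor}}$. Writing $[A]=(\eta,\alpha)$ with $\alpha\in H^2_{\mathrm{et}}(\mathrm{Spec}(k),\bbG_m)$ non-torsion (this non-torsionness is exactly the hypothesis that $A$ is not Morita equivalent to an ordinary Azumaya algebra), we get $r[A]=(r\eta,r\alpha)$, and $r\alpha$ is still non-torsion since $r\neq 0$ and $H^2_{\mathrm{et}}(\mathrm{Spec}(k),\bbG_m)$ is an abelian group in which a nonzero integer multiple of a non-torsion element is non-torsion. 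Hence $r[A]\notin H^1_{\mathrm{et}}(\mathrm{Spec}(k),\bbZ)\times\mathrm{Br}(k)$, i.e.\ $A^{\otimes^{\bf L}r}$ is not Morita equivalent to an ordinary Azumaya $k$-algebra.

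With that in hand the proof concludes: for $n\neq m$, $C^\op\otimes^{\bf L}D\sim A^{\otimes^{\bf L}(m-n)}$ is not Morita equivalent to an ordinary Azumaya $k$-algebra, so Theorem \ref{thm:main22} gives that the images of $[C]=n[A]$ and $[D]=m[A]$ under \eqref{eq:canonical1} differ; since this holds for all $n\neq m$, the restriction of \eqref{eq:canonical1} to $\bbZ\cdot[A]$ is injective. I expect the only genuinely substantive point to be the identification, under \eqref{eq:bijection-key2}, of the image of the (Morita classes of) ordinary Azumaya algebras with $H^1_{\mathrm{et}}\times\mathrm{Br}(k)$ — i.e.\ that "Morita equivalent to an ordinary Azumaya algebra'' corresponds exactly to "torsion $H^2$-component'' — which is precisely the content recalled around isomorphism \eqref{eq:bijection-key2} in the introduction (any non-torsion class in $H^2_{\mathrm{et}}(X,\bbG_m)$ is represented by a dg Azumaya algebra not Morita equivalent to an ordinary one, and conversely ordinary Azumaya algebras land in $\mathrm{Br}'(X)$); everything else is the elementary group-theoretic observation about non-torsion elements, which is exactly the fact already invoked in the proof of Corollary \ref{cor:(non)torsion}.
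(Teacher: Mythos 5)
Your overall strategy---apply Theorem \ref{thm:main22} to $C=A^{\otimes^{\bf L}n}$ and $D=A^{\otimes^{\bf L}m}$ and verify that $C^\op\otimes^{\bf L}D$, which represents $(m-n)[A]$, is not Morita equivalent to an ordinary Azumaya $k$-algebra---is exactly the intended reduction. The gap is in the verification. Under \eqref{eq:bijection-key2} the Morita classes of ordinary Azumaya $k$-algebras do \emph{not} form the subgroup $H^1_{\mathrm{et}}(\mathrm{Spec}(k),\bbZ)\times\mathrm{Br}(k)$: they form $\{0\}\times\mathrm{Br}(k)$, since the paper recalls that $\mathrm{Br}(X)$ corresponds to a subgroup of $\mathrm{Br}'(X)=H^2_{\mathrm{et}}(X,\bbG_m)_{\mathrm{tor}}$ (note also that $\mathrm{Br}(k)$ is torsion, while $H^1\times\mathrm{Br}$ is not whenever $H^1\neq 0$). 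Consequently your claim that the hypothesis ``$A$ is not Morita equivalent to an ordinary Azumaya $k$-algebra'' is \emph{equivalent} to the $H^2$-component $\alpha$ being non-torsion is false: writing $[A]=(\eta,\alpha)$, the hypothesis says $\eta\neq 0$ \emph{or} $\alpha$ is non-torsion. Your argument silently excludes the case $\eta\neq 0$ with $\alpha$ torsion, which is precisely the situation addressed by the corollary and the nodal-curve example immediately preceding this statement (there $H^1_{\mathrm{et}}(\mathrm{Spec}(k),\bbZ)\simeq\bbZ$); for such an $A$ your criterion would even declare $A$ itself Morita equivalent to an ordinary Azumaya algebra, so the proof as written does not go through.

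The repair is short and preserves your reduction: for $r=m-n\neq 0$ one must show $r[A]=(r\eta,r\alpha)\notin\{0\}\times\mathrm{Br}(k)$. If $\eta\neq 0$, then $r\eta\neq 0$ because $H^1_{\mathrm{et}}(\mathrm{Spec}(k),\bbZ)$ is torsion-free (as recalled in the introduction from Weibel). If $\eta=0$, then $\alpha\notin\mathrm{Br}(k)$, and since $\mathrm{Spec}(k)$ is affine (hence carries an ample line bundle) Gabber's theorem gives $\mathrm{Br}(k)=\mathrm{Br}'(k)=H^2_{\mathrm{et}}(\mathrm{Spec}(k),\bbG_m)_{\mathrm{tor}}$, so $\alpha$ is non-torsion and therefore so is $r\alpha$; this is the case your argument does handle. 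With this case distinction, $A^{\otimes^{\bf L}r}$ is not Morita equivalent to an ordinary Azumaya $k$-algebra for every $r\neq 0$, Theorem \ref{thm:main22} applies to each pair $n\neq m$, and the corollary follows. (Your implicit use of the standing hypothesis $K_0(k)_\bbQ\simeq\bbQ$ of Theorem \ref{thm:main22}, justified by $k$ being local or a principal ideal domain, is consistent with the paper's setting.)
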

\begin{corollary}\label{cor:last1}
Given a dg Azumaya $k$-algebra $A$ which is not Morita equivalent to an ordinary Azumaya $k$-algebra, the image of $[A]$ under the canonical map \eqref{eq:canonical1} is different from the images of the ordinary Azumaya $k$-algebras.
\end{corollary}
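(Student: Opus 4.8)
The plan is to bootstrap Corollary~\ref{cor:last1} from Theorem~\ref{thm:main22}. Let $B$ be an arbitrary ordinary Azumaya $k$-algebra; I need to show that the images of $[A]$ and $[B]$ under the canonical map \eqref{eq:canonical1} are different. In view of Theorem~\ref{thm:main22}, it suffices to prove that $A^\op \otimes^{\bf L} B$ is not Morita equivalent to an ordinary Azumaya $k$-algebra.

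To establish this, I would argue by contradiction. Suppose $A^\op \otimes^{\bf L} B$ is Morita equivalent to an ordinary Azumaya $k$-algebra $C$. Tensoring this Morita equivalence on the right by $B^\op$ and using that $B \otimes^{\bf L} B^\op$ is Morita equivalent to $k$ (since $B$ is ordinary Azumaya, so that $B\otimes_k B^\op \cong \End_k(B)$), one obtains that $A^\op$ is Morita equivalent to $C \otimes^{\bf L} B^\op$. But $C$ and $B^\op$ are ordinary Azumaya $k$-algebras and, being finitely generated projective as $k$-modules, satisfy $C \otimes^{\bf L}_k B^\op = C \otimes_k B^\op$; hence $C \otimes^{\bf L} B^\op$ is an ordinary Azumaya $k$-algebra. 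Passing to opposite algebras, $A = (A^\op)^\op$ would then be Morita equivalent to the ordinary Azumaya $k$-algebra $(C \otimes_k B^\op)^\op = C^\op \otimes_k B$, contradicting the hypothesis on $A$. Equivalently, and more conceptually: via the isomorphism \eqref{eq:bijection-key2} the Morita classes of ordinary Azumaya $k$-algebras form the subgroup $\mathrm{Br}(k)$ of $\mathrm{dBr}(k)$, so the hypothesis says precisely that $[A]\notin \mathrm{Br}(k)$; since $[B]\in\mathrm{Br}(k)$ and $\mathrm{Br}(k)$ is closed under the group law and under $[D]\mapsto[D^\op]$, the class $[A^\op\otimes^{\bf L}B]$ also lies outside $\mathrm{Br}(k)$. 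Theorem~\ref{thm:main22} then yields that the images of $[A]$ and $[B]$ under \eqref{eq:canonical1} are distinct, and since $B$ was arbitrary this proves the corollary.

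I do not expect any genuine obstacle here: the argument is a formal consequence of Theorem~\ref{thm:main22} together with the elementary stability of ordinary Azumaya algebras under $\otimes_k$ and under $(-)^\op$. The only minor point requiring a word of justification is the identity $C\otimes^{\bf L}_k B^\op = C\otimes_k B^\op$ (vanishing of higher Tor), which holds because ordinary Azumaya algebras are finitely generated projective $k$-modules, together with the standard Morita triviality of $B\otimes^{\bf L} B^\op$.
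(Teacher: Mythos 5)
Your proposal is correct, and its core reduction is the intended one: apply Theorem~\ref{thm:main22} with $B$ an arbitrary ordinary Azumaya $k$-algebra (keeping the standing hypothesis $K_0(k)_\bbQ\simeq\bbQ$), after checking that $A^\op\otimes^{\bf L}B$ is not Morita equivalent to an ordinary Azumaya $k$-algebra. Where you differ from the paper is in how that check is carried out. The paper gives no explicit proof of this corollary, but its proof of the parallel global statement (Corollary~\ref{cor:(non)torsion}) goes through torsion: via \eqref{eq:bijection-key2} and Gabber's theorem the ordinary classes are exactly the torsion classes, and the difference of a non-torsion and a torsion element is non-torsion. Your argument is instead a pure coset argument: the derived Morita classes of ordinary Azumaya algebras form a subgroup of $\mathrm{dBr}(k)$ (stable under $\otimes_k$ and $(-)^\op$), so $[A]\notin\mathrm{Br}(k)$ and $[B]\in\mathrm{Br}(k)$ force $[A^\op\otimes^{\bf L}B]\notin\mathrm{Br}(k)$. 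This is more elementary and more general, since it needs neither the identification of ordinary classes with torsion classes nor \eqref{eq:bijection-key2} at all; the torsion route, on the other hand, is what yields the stronger ``torsion versus non-torsion'' statements elsewhere in the paper. One small caveat in your first, algebra-level version: derived Morita equivalence of dg algebras is not in general known to be preserved by passing to opposites, so the step ``$A^\op\sim C\otimes_k B^\op$ implies $A\sim C^\op\otimes_k B$'' should be justified by working in the group $\mathrm{dBr}(k)$, where $[X^\op]=[X]^{-1}$ --- which is exactly what your second, conceptual formulation does, so the proof stands as written there.
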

\subsection*{Proof of Theorem \ref{thm:main22}}
As in proof of Theorem \ref{thm:main3}, we have the composition:
\begin{eqnarray}\label{eq:canonical2222}
\mathrm{dBr}(k) \stackrel{\eqref{eq:canonical1}}{\too} K_0^{(2)}(k) \simeq \cP\cT(k) \too K_0(\NChow(k)) && [A]\mapsto [U(A)]\,.
\end{eqnarray}
In what follows, we prove that $[U(A)]\neq [U(B)]$ in $K_0(\NChow(k))$; note that this automatically implies Theorem \ref{thm:main22}. We start by recalling the following result:
\begin{theorem}(\cite[Thm.~B.15]{Separable})\label{thm:last}
Given a dg Azumaya $k$-algebra $A$ which is not Morita equivalent to an ordinary Azumaya $k$-algebra, we have $U(A)_\bbQ \not\simeq U(k)_\bbQ$ in $\NChow(k)_\bbQ$.
\end{theorem}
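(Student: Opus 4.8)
The plan is to argue by contradiction: assuming $U(A)_\bbQ\simeq U(k)_\bbQ$ in $\NChow(k)_\bbQ$, I would first convert this isomorphism into explicit $K$-theoretic data for $A$, and then show that this data would trivialize, rationally, the derived Brauer class carried by $A$ — contradicting the hypothesis that $A$ is not Morita equivalent to an ordinary Azumaya algebra. For the first step, recall the formal fact that $U(A)_\bbQ$ is always $\otimes$-invertible in $\NChow(k)_\bbQ$, with inverse $U(A^\op)_\bbQ$; this is immediate from the defining property of a dg Azumaya algebra, namely that the functor of Remark \ref{rk:Dg} is an equivalence, \ie that $A^\op\otimes^{\bf L}A$ is Morita equivalent to $k$. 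Fixing an isomorphism $U(k)_\bbQ\isoto U(A)_\bbQ$ and using \eqref{eq:iso-last}, it becomes a class $x\in\Hom_{\NChow(k)_\bbQ}(U(k)_\bbQ,U(A)_\bbQ)\simeq K_0(A)_\bbQ$ with two-sided inverse $y\in\Hom_{\NChow(k)_\bbQ}(U(A)_\bbQ,U(k)_\bbQ)\simeq K_0(A^\op)_\bbQ$, the composite $y\circ x$ being the identity of $\End_{\NChow(k)_\bbQ}(U(k)_\bbQ)=K_0(k)_\bbQ$ and $x\circ y$ the identity of $\End_{\NChow(k)_\bbQ}(U(A)_\bbQ)$, which by \eqref{eq:ring-End} is again $K_0(k)_\bbQ$. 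Since composition corresponds under \eqref{eq:iso-last} to derived tensor product of bimodules, clearing denominators yields a positive integer $N$ and integral classes $[P]\in K_0(A)$, $[Q]\in K_0(A^\op)$ with $[P\otimes^{\bf L}_AQ]=N\,[k]$ in $K_0(k)$ and $[Q\otimes^{\bf L}_kP]=N$ times the class of the diagonal bimodule in $K_0(A^\op\otimes^{\bf L}A)$; informally, $A$ is ``Morita trivial after $\otimes\bbQ$''.

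The heart of the argument — and the step I expect to be the main obstacle — is to show that such data cannot exist when $A$ is not Morita equivalent to an ordinary Azumaya algebra. I would use the structure theory behind \eqref{eq:bijection-key2}: $A$ determines a class $(\beta,\alpha)\in H^1_{\mathrm{et}}(\Spec(k),\bbZ)\times H^2_{\mathrm{et}}(\Spec(k),\bbG_m)$, and $\perf_\dg(A)$ is the dg category of $(\beta,\alpha)$-twisted perfect complexes, so $K_0(A)$ is a twisted $K_0$-group and $[P]$ above is the class of a twisted perfect complex ($[Q]$ of one twisted by the inverse class). One then manufactures a rational refinement of $(\beta,\alpha)$ — a ``twisted reduced Chern class'' $c(A)\in\big(H^1_{\mathrm{et}}(\Spec(k),\bbZ)\oplus H^2_{\mathrm{et}}(\Spec(k),\bbG_m)\big)\otimes\bbQ$, depending only on $A$ — which on the one hand recovers the image of $(\beta,\alpha)$ in that group and, on the other hand, must vanish whenever data $(N,[P],[Q])$ as above exist (heuristically, a ``reduced-rank-one'' twisted module trivializes the underlying $\bbZ$-torsor and $\bbG_m$-gerbe after inverting the integers). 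Making this precise is exactly where one must invoke the fine theory of sheaves of dg Azumaya algebras and their twisted $K$-theory from \cite{Azumaya} and \cite[Appendix~B]{Separable}; a purely formal manipulation inside $\NChow(k)_\bbQ$ will not suffice.

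To conclude: since $\Spec(k)$ carries the ample line bundle $\cO$ we have $\Br(k)=\Br'(k)=H^2_{\mathrm{et}}(\Spec(k),\bbG_m)_{\mathrm{tor}}$, and $H^1_{\mathrm{et}}(\Spec(k),\bbZ)$ is torsion-free by \cite[Cor.~7.9.1]{Pic-Weibel}; hence the hypothesis that $A$ is not Morita equivalent to an ordinary Azumaya algebra is precisely the statement that $(\beta,\alpha)$ has infinite order in $\dBr(k)$, \ie that its image in $\big(H^1_{\mathrm{et}}(\Spec(k),\bbZ)\oplus H^2_{\mathrm{et}}(\Spec(k),\bbG_m)\big)\otimes\bbQ$ is nonzero. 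This contradicts the previous paragraph, so $U(A)_\bbQ\not\simeq U(k)_\bbQ$. Note that this genuinely relative, infinite-order phenomenon becomes invisible after base change to a residue field $\kappa$, where every class in $H^2_{\mathrm{et}}(-,\bbG_m)$ is torsion and every dg Azumaya algebra is already Morita equivalent to a central simple algebra (so that $U(A\otimes^{\bf L}_k\kappa)_\bbQ$ does become isomorphic to the unit); in the singular examples of interest the obstruction is instead carried by the torsion-free, non-finitely-generated kernel of a restriction map $H^2_{\mathrm{et}}(\Spec(k),\bbG_m)\to H^2_{\mathrm{et}}(\kappa,\bbG_m)$.
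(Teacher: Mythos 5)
First, a point of comparison: the paper does not prove this statement at all --- it is imported verbatim from \cite[Thm.~B.15]{Separable} --- so the only meaningful question is whether your argument would stand on its own. It does not, because its central step is asserted rather than proved. Your first paragraph (translating a hypothetical isomorphism $U(k)_\bbQ\simeq U(A)_\bbQ$ via \eqref{eq:iso-last} and Remark \ref{rk:Dg} into classes in $K_0(A)_\bbQ$ and $K_0(A^\op)_\bbQ$ composing to units, then clearing denominators) is fine and purely formal. But the entire content of the theorem is your second paragraph, where you postulate a ``twisted reduced Chern class'' $c(A)$ valued in $\bigl(H^1_{\mathrm{et}}(\Spec(k),\bbZ)\oplus H^2_{\mathrm{et}}(\Spec(k),\bbG_m)\bigr)\otimes\bbQ$ that (i) recovers the rationalized class of $A$ under \eqref{eq:bijection-key2} and (ii) vanishes whenever the data $(N,[P],[Q])$ exist. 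Neither (i) nor (ii) is constructed or proved, and you explicitly concede that a formal manipulation will not suffice and that one must invoke the fine theory of \cite{Azumaya} and \cite[Appendix~B]{Separable} --- which is precisely the theory in which the quoted theorem is proved. Reducing the statement to an unconstructed invariant with exactly the properties needed is not a proof; as it stands the argument is circular in effect, since establishing (i)--(ii) is essentially equivalent to the theorem itself.

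There are also two concrete technical obstacles your heuristic would have to overcome. First, $[P]\in K_0(A)$ and $[Q]\in K_0(A^\op)$ are only \emph{virtual} classes (differences of twisted perfect complexes), so ``reduced-rank-one after inverting $N$'' does not directly trivialize a $\bbG_m$-gerbe; the relevant known statements relate the \emph{torsion order} of the class to the ranks of actual twisted perfect complexes, and ranks of virtual classes can be produced by cancellation. Second, your mechanism says nothing specific about the $H^1_{\mathrm{et}}(\Spec(k),\bbZ)$-component of the derived Brauer class, which must also be handled (your final paragraph correctly reduces the hypothesis to ``infinite order in $\dBr(k)$'', using that $\Spec(k)$ is affine so $\Br(k)=\Br'(k)$ and $H^1$ is torsion-free, but that reduction is the easy part). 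The proof in \cite[Appendix~B]{Separable} rests on nontrivial input about the $K$-theory of dg Azumaya algebras of infinite order (not on a formal argument inside $\NChow(k)_\bbQ$), and some such input must be supplied before your outline becomes a proof.
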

\begin{proposition}\label{prop:new11}
Assume that $K_0(k)_\bbQ\simeq \bbQ$. Given dg Azumaya $k$-algebras $A$ and $B$ such that $A^\op \otimes^{\bf L} B$ is not Morita equivalent to an ordinary Azumaya $k$-algebra, we have $U(A)_\bbQ\not\simeq U(B)_\bbQ$ in $\NNum(k)_\bbQ$. Moreover, we have
\begin{equation}\label{eq:vanishing1}
\Hom_{\NNum(k)_\bbQ}(U(A)_\bbQ, U(B)_\bbQ) = \Hom_{\NNum(k)_\bbQ}(U(B)_\bbQ, U(A)_\bbQ) =0\,.
\end{equation}
\end{proposition}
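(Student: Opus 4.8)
The plan is to reduce the whole statement to the single non-formal input Theorem~\ref{thm:last}, exploiting the rigidity of $\otimes$-invertible objects in $\NChow(k)_\bbQ$, and then to transfer the conclusion to $\NNum(k)_\bbQ$ using only the definition of the $\cN$-ideal. First I would set $C:=A^\op\otimes^{\bf L}B$, which is again a dg Azumaya $k$-algebra (recall that the derived tensor product of dg Azumaya algebras is again a dg Azumaya algebra) and which, by assumption, is not Morita equivalent to an ordinary Azumaya $k$-algebra. For each $D\in\{A,B,C\}$, Remark~\ref{rk:Dg} exhibits a $\otimes$-equivalence $\cD_c(k)\simeq\cD_c(D^\op\otimes^{\bf L}D)$, i.e.\ $D^\op\otimes^{\bf L}D$ is Morita equivalent to $k$; since $U(-)_\bbQ$ is a Morita-invariant $\otimes$-functor (see \cite[\S4.1]{book}), this gives $U(D^\op)_\bbQ\otimes U(D)_\bbQ\simeq U(k)_\bbQ$, so $U(D)_\bbQ$ is $\otimes$-invertible with inverse $U(D^\op)_\bbQ$, and moreover $U(A)_\bbQ^\vee\otimes U(B)_\bbQ\simeq U(C)_\bbQ$. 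Finally, combining the ring isomorphism \eqref{eq:ring-End} of Remark~\ref{rk:Dg} with \eqref{eq:iso-last} and the hypothesis $K_0(k)_\bbQ\simeq\bbQ$, I would record that $\End_{\NChow(k)_\bbQ}(U(A)_\bbQ)$, $\End_{\NChow(k)_\bbQ}(U(B)_\bbQ)$ and $\End_{\NChow(k)_\bbQ}(U(k)_\bbQ)$ are all isomorphic, as rings, to $\bbQ$.

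Next I would prove the two facts at the level of $\NChow(k)_\bbQ$. Because $U(A)_\bbQ$ and $U(B)_\bbQ$ are $\otimes$-invertible and $U(A)_\bbQ^\vee\otimes U(B)_\bbQ\simeq U(C)_\bbQ$, tensoring by a $\otimes$-inverse shows that $U(A)_\bbQ\simeq U(B)_\bbQ$ in $\NChow(k)_\bbQ$ if and only if $U(C)_\bbQ\simeq U(k)_\bbQ$ in $\NChow(k)_\bbQ$; as $C$ is not Morita equivalent to an ordinary Azumaya $k$-algebra, Theorem~\ref{thm:last} shows the latter fails, whence $U(A)_\bbQ\not\simeq U(B)_\bbQ$. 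Then I would invoke the elementary fact that, in a rigid symmetric monoidal additive idempotent-complete category whose unit $\mathbf 1$ has $\End(\mathbf 1)$ with no non-trivial idempotents, every $\otimes$-invertible object is indecomposable, hence every split monomorphism between $\otimes$-invertible objects is an isomorphism (if $L\to M$ is split mono then $M\simeq L\oplus K$, and tensoring by $L^\vee$ together with indecomposability of the invertible object $L^\vee\otimes M$ forces $K\simeq0$). In $\NChow(k)_\bbQ$, where $\End(\mathbf 1)\simeq\bbQ$, this yields: if $\varphi\colon U(A)_\bbQ\to U(B)_\bbQ$ and $\psi\colon U(B)_\bbQ\to U(A)_\bbQ$ satisfied $\psi\circ\varphi\neq0$ in the field $\End_{\NChow(k)_\bbQ}(U(A)_\bbQ)\simeq\bbQ$, then $(\psi\circ\varphi)^{-1}\circ\psi$ would split $\varphi$, making $\varphi$ an isomorphism and contradicting the previous sentence. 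Therefore the composition pairing $\Hom_{\NChow(k)_\bbQ}(U(A)_\bbQ,U(B)_\bbQ)\times\Hom_{\NChow(k)_\bbQ}(U(B)_\bbQ,U(A)_\bbQ)\to\End_{\NChow(k)_\bbQ}(U(A)_\bbQ)$ is identically zero, and likewise after interchanging $A$ and $B$.

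Finally I would descend to $\NNum(k)_\bbQ$. Since it is the idempotent completion of $\NChow(k)_\bbQ/\cN$, one has $\Hom_{\NNum(k)_\bbQ}(U(A)_\bbQ,U(B)_\bbQ)=\Hom_{\NChow(k)_\bbQ}(U(A)_\bbQ,U(B)_\bbQ)/\cN(U(A)_\bbQ,U(B)_\bbQ)$. For $\varphi$ in the numerator and any $g\in\Hom_{\NChow(k)_\bbQ}(U(B)_\bbQ,U(A)_\bbQ)$, the vanishing of the composition pairing gives $g\circ\varphi=0$, hence $\mathrm{tr}(g\circ\varphi)=0$; so $\varphi\in\cN$ and $\Hom_{\NNum(k)_\bbQ}(U(A)_\bbQ,U(B)_\bbQ)=0$. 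The symmetric argument yields the second equality in \eqref{eq:vanishing1}. Since $U(A)_\bbQ$ is $\otimes$-invertible, its image in $\NNum(k)_\bbQ$ is non-zero, so this vanishing also precludes any isomorphism $U(A)_\bbQ\simeq U(B)_\bbQ$ in $\NNum(k)_\bbQ$, which completes the proof.

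The only non-formal ingredient here is Theorem~\ref{thm:last}; everything else is soft bookkeeping with rigid tensor categories. The point that requires care — and the one I expect to be the main obstacle — is that an isomorphism in $\NNum(k)_\bbQ$ need not lift to $\NChow(k)_\bbQ$, so the $\NChow$-level non-isomorphism does not by itself give the statement; one is forced to prove the strictly stronger vanishing of the composition pairing, and it is precisely there that $\otimes$-invertibility (in place of the reduced-index computation used for the analogous \cite[Prop.~6.2(i)]{ANT}) does the work. A minor point also worth verifying carefully is that $U(-)_\bbQ$ turns Morita equivalences into isomorphisms and that $U(A)_\bbQ^\vee$ may be identified with $U(A^\op)_\bbQ$ — both standard for noncommutative Chow motives but used repeatedly above.
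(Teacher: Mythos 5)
Your proposal is correct and follows essentially the same route as the paper's proof: Theorem~\ref{thm:last} applied to $A^\op\otimes^{\bf L}B$ together with the monoidality of $U(-)_\bbQ$ (equivalently, $\otimes$-invertibility of $U(A)_\bbQ$) gives the non-isomorphism in $\NChow(k)_\bbQ$, the identification $\End(U(A)_\bbQ)\simeq\End(U(B)_\bbQ)\simeq K_0(k)_\bbQ\simeq\bbQ$ forces the composition pairing to vanish, and this vanishing passes through the $\cN$-ideal to give \eqref{eq:vanishing1} and the non-isomorphism in $\NNum(k)_\bbQ$. You merely spell out in more detail (split-mono/indecomposability argument, explicit descent modulo $\cN$) what the paper states tersely.
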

\begin{proof}
Since $A^\op \otimes^{\bf L} B$ is not Morita equivalent to an ordinary Azumaya $k$-algebra, Theorem \ref{thm:last} (with $A$ replaced by $A^\op \otimes^{\bf L}B$) implies that $U(A^\op \otimes^{\bf L}B)_\bbQ \not\simeq U(k)_\bbQ$ in $\NChow(k)_\bbQ$. Consequently, since the functor $U(-)_\bbQ$ is symmetric monoidal and $A^\op \otimes^{\bf L} A$ is Morita equivalent to $k$, we have $U(A)_\bbQ \not\simeq U(B)_\bbQ$ in $\NChow(k)_\bbQ$. By assumption, $\mathrm{End}_{\NChow(k)_\bbQ}(U(k)_\bbQ)\simeq K_0(k)_\bbQ \simeq \bbQ$. Making use of Remark \ref{rk:Dg}, we hence obtain ring isomorphisms (in $\NChow(k)_\bbQ$ and $\NNum(k)_\bbQ$):
\begin{equation}\label{eq:isos-new}
\mathrm{End}(U(A)_\bbQ) \simeq \mathrm{End}(U(B)_\bbQ)\simeq \mathrm{End}(U(k)_\bbQ)\simeq \bbQ\,.
\end{equation}
Note that the composition bilinear pairing (in $\NChow(k)_\bbQ$)
\begin{equation}\label{eq:pairing-new1}
\Hom(U(A)_\bbQ, U(B)_\bbQ) \times \Hom(U(B)_\bbQ, U(A)_\bbQ) \too \mathrm{End}(U(A)_\bbQ)
\end{equation}
is zero; similarly with $U(A)_\bbQ$ and $U(B)_\bbQ$ replaced by $U(B)_\bbQ$ and $U(A)_\bbQ$, respectively. This follows from the fact that the right-hand side of \eqref{eq:pairing-new1} is isomorphic to $\bbQ$, from the fact that the category $\NChow(k)_\bbQ$ is $\bbQ$-linear, and from the fact that $U(A)_\bbQ \not \simeq U(B)_\bbQ$ in $\NChow(k)_\bbQ$. Making use of the ring isomorphisms \eqref{eq:isos-new}, we hence conclude not only that $U(A)_\bbQ \not\simeq U(B)_\bbQ$ in $\NNum(k)_\bbQ$ but also that the $\bbQ$-vector spaces \eqref{eq:vanishing1} are zero.
\end{proof}
Now, let us assume by absurd that $[U(A)]=[U(B)]$ in the Grothendieck ring $K_0(\NChow(k))$. This is equivalent to the following condition:
\begin{equation}\label{eq:cond-last11}
\exists\, N\!\!M \in \NChow(k)\,\,\mathrm{such}\,\,\mathrm{that}\,\, U(A) \oplus N\!\!M \simeq U(B) \oplus N\!\!M\,.
\end{equation}
Thanks to Lemma \ref{lem:key111} below, if condition \eqref{eq:cond-last11} holds, then there exist integers $n, m \geq 0$ and a noncommutative numerical motive $N\!\!M'$~such~that 
\begin{equation}\label{eq:iso-global-11}
\oplus^{n+1}_{i=1} U(A)_\bbQ \oplus \oplus^m_{j=1} U(B)_\bbQ \oplus N\!\!M' \simeq \oplus^n_{i=1} U(A)_\bbQ \oplus \oplus^{m+1}_{j=1} U(B)_\bbQ \oplus N\!\!M'
\end{equation}
in $\NNum(k)_\bbQ$. Moreover, $N\!\!M'$ does not contains $U(A)_\bbQ$ neither $U(B)_\bbQ$ as direct summands. Similarly to the proof of Theorem \ref{thm:main3} (with $\bbF_p$ replaced by $\bbQ$ and with Proposition \ref{prop:index1} replaced by Proposition \ref{prop:new11}), we conclude that the preceding isomorphism \eqref{eq:iso-global-11} restricts to an isomorphism
$$
U(A)_\bbQ\oplus \oplus^n_{i=1} U(A)_\bbQ \oplus \oplus^m_{j=1} U(B)_\bbQ \simeq U(B)_\bbQ \oplus \oplus^n_{i=1} U(A)_\bbQ \oplus \oplus^m_{j=1} U(B)_\bbQ\,.
$$
Let us denote by $\langle U(A)_\bbQ, U(B)_\bbQ\rangle^\oplus$ the smallest additive and idempotent complete full subcategory of $\NNum(k)_\bbQ$  containing the objects $U(A)_\bbQ$ and $U(B)_\bbQ$. Note that the preceding isomorphism belongs to this subcategory. It is well-known that the functor $\Hom_{\NNum(k)_\bbQ}(U(A)_\bbQ\oplus U(B)_\bbQ, -)$ induces an equivalence between $\langle U(A)_\bbQ, U(B)_\bbQ\rangle^\oplus$ and the category of finitely generated projective (right) modules over the ring $\mathrm{End}_{\NNum(k)_\bbQ}(U(A)_\bbQ\oplus U(B)_\bbQ)$. Thanks to Proposition \ref{prop:new11}, and to the ring isomorphisms \eqref{eq:isos-new}, the latter ring is isomorphic to $\bbQ\times \bbQ$. Therefore, the category $\langle U(A)_\bbQ, U(B)_\bbQ\rangle^\oplus$ is equivalent to the product $\mathrm{Vect}(\bbQ) \times \mathrm{Vect}(\bbQ)$, where $\mathrm{Vect}(\bbQ)$ stands for the category of finite dimensional $\bbQ$-vector spaces. In particular, the latter category satisfies the cancellation property with respect to direct sums. We hence conclude from the preceding isomorphism that $U(A)_\bbQ \simeq U(B)_\bbQ$ in $\NNum(k)_\bbQ$, which is a contradiction. This shows that $[U(A)]\neq [U(B)]$ in $K_0(\NChow(k))$, and therefore concludes the proof of Theorem \ref{thm:main22}.
\begin{lemma}\label{lem:key111}
If the above condition \eqref{eq:cond-last11} holds, then there exist integers $n,m \geq 0$ and a noncommutative numerical motive $N\!\!M' \in \NNum(k)_\bbQ$ such that:
\begin{itemize}
\item[(i)] We have $N\!\!M_\bbQ \simeq \oplus^n_{i=1} U(A)_\bbQ \oplus \oplus _{j=1}^m U(B)_\bbQ \oplus N\!\!M'$ in $\NNum(k)_\bbQ$.
\item[(ii)] The noncommutative numerical motive $N\!\!M'$ does not contains $U(A)_\bbQ$ neither $U(B)_\bbQ$ as direct summands.
\end{itemize}
\end{lemma}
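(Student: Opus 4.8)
The plan is to follow, essentially word for word, the proof of Lemma~\ref{lem:key11}, substituting the coefficient ring $\bbQ$ for $\bbF_p$ throughout and the ring isomorphisms \eqref{eq:isos-new} for \eqref{eq:identifications}. Since $\NNum(k)_\bbQ$ is idempotent complete, I would first split off from $N\!\!M_\bbQ$, one summand at a time, all the direct summands isomorphic to $U(A)_\bbQ$ or to $U(B)_\bbQ$; this yields an isomorphism $N\!\!M_\bbQ \simeq U(A)_\bbQ \oplus \cdots \oplus U(A)_\bbQ \oplus U(B)_\bbQ \oplus \cdots \oplus U(B)_\bbQ \oplus N\!\!M'$ in $\NNum(k)_\bbQ$ in which $N\!\!M'$, by construction, contains neither $U(A)_\bbQ$ nor $U(B)_\bbQ$ as a direct summand, so that condition (ii) is satisfied. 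As in Lemma~\ref{lem:key11} one does not claim a priori that this splitting process terminates; it then remains to check that only finitely many copies of $U(A)_\bbQ$ and of $U(B)_\bbQ$ appear, which is exactly condition (i).

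For the finiteness I would argue by contradiction, treating the copies of $U(A)_\bbQ$ (the case of $U(B)_\bbQ$ being entirely similar). If infinitely many copies of $U(A)_\bbQ$ appeared, the natural shift isomorphism $U(A)_\bbQ \oplus \oplus^\infty_{i=1} U(A)_\bbQ \simeq \oplus^\infty_{i=1} U(A)_\bbQ$ would force $[U(A)_\bbQ]=0$ in $K_0(\NNum(k)_\bbQ)$. Applying the Euler characteristic ring homomorphism $\chi\colon K_0(\NNum(k)_\bbQ)\to \End_{\NNum(k)_\bbQ}(U(k)_\bbQ)$ then gives $\chi([U(A)_\bbQ])=0$. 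On the other hand, $A$ is a dg Azumaya $k$-algebra, so Remark~\ref{rk:Dg} together with the ring isomorphisms \eqref{eq:isos-new} identify $\chi([U(A)_\bbQ])$ with $\chi([U(k)_\bbQ])$; and the assumption $K_0(k)_\bbQ\simeq \bbQ$ gives $\End_{\NNum(k)_\bbQ}(U(k)_\bbQ)\simeq K_0(k)_\bbQ\simeq \bbQ$ with $\chi([U(k)_\bbQ])=1$. Since $1\neq 0$ in $\bbQ$, this is the desired contradiction.

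I do not anticipate a genuine obstacle, since the argument is a transcription of the one for Lemma~\ref{lem:key11}. The one point that deserves a word of care is the equality $\chi([U(A)_\bbQ])=\chi([U(k)_\bbQ])$: this can be seen either directly --- the $\otimes$-equivalence $\cD_c(k)\isoto \cD_c(A^\op\otimes^{\bf L}A)$ of Remark~\ref{rk:Dg} carries the unit to the diagonal bimodule, so the composite computing $\chi(U(A)_\bbQ)$ is represented by the Hochschild homology complex of $A$, which is Morita invariant and hence agrees with that of $k$ --- or simply by transporting $\chi(U(A)_\bbQ)$ along \eqref{eq:isos-new}, exactly as in the $\bbF_p$ case. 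What must not be lost sight of is that it is the hypothesis $K_0(k)_\bbQ\simeq \bbQ$, rather than $K_0(k)\simeq \bbZ$, that makes $\End_{\NNum(k)_\bbQ}(U(k)_\bbQ)$ a nonzero ring in which $1\neq 0$, and this is all the Euler characteristic step needs.
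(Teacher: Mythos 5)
Your proposal is correct and is essentially identical to the paper's own proof, which simply says to repeat the proof of Lemma \ref{lem:key11} with $\bbF_p$ replaced by $\bbQ$ and the ring isomorphisms \eqref{eq:identifications} replaced by \eqref{eq:isos-new}. Your extra care about the equality $\chi([U(A)_\bbQ])=\chi([U(k)_\bbQ])$ and the role of the hypothesis $K_0(k)_\bbQ\simeq\bbQ$ is consistent with what the paper relies on implicitly.
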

\begin{proof}
The proof is similar to the proof of Lemma \ref{lem:key11}; simply replace $\bbF_p$ by $\bbQ$ and the ring isomorphisms \eqref{eq:identifications} by the ring isomorphisms \eqref{eq:isos-new}.
\end{proof}
\subsection*{Step II}
Let $x$ be the isolated singularity of $X$. Since by assumption $X$ is normal, the affine $k$-scheme $\mathrm{Spec}(\cO_{X,x})$ is also normal. Consequently, we have $H^1_{\mathrm{et}}(X,\bbZ)=H^1_{\mathrm{et}}(\cO_{X,x},\bbZ)=0$. Consider the canonical morphism $f\colon \mathrm{Spec}(\cO_{X,x}) \to X$ and the associated commutative square:
\begin{equation}\label{eq:2-squares}
\xymatrix{
H^2_{\mathrm{et}}(X,\bbG_m) \simeq \mathrm{dBr}(X) \ar[d]_-{f^\ast} \ar[rr]^-{\eqref{eq:canonical-1}} &&    K_0^{(2)}(X) \ar[d]^-{f^\ast} \\
H^2_{\mathrm{et}}(\cO_{X,x},\bbG_m) \simeq \mathrm{dBr}(\cO_{X,x}) \ar[rr]_-{\eqref{eq:canonical-1}} && K_0^{(2)}(\cO_{X,x})\,.
}
\end{equation}
As proved in \cite[Thm.~4.1]{OPS} (see also \cite[Lem.~3]{Bertuccioni}), the left-hand side vertical homomorphism in \eqref{eq:2-squares} is injective. Note that since by assumption $A^\op \otimes^{\bf L} B$ is not Morita equivalent to a sheaf of ordinary Azumaya algebras, this injectivity implies that $f^\ast(A^\op \otimes^{\bf L} B) \simeq f^\ast(A)^\op \otimes^{\bf L} f(B)$ is not Morita equivalent to an ordinary Azumaya $\cO_{X,x}$-algebra; otherwise $f^\ast(A^\op \otimes^{\bf L} B)$ would be a torsion class because $\mathrm{Br}(X)\simeq H^2_{\mathrm{et}}(X,\bbG_m)_{\mathrm{tor}}$ and $\mathrm{Br}(\cO_{X,x}) \simeq H^2_{\mathrm{et}}(\cO_{X,x}, \bbG_m)_{\mathrm{tor}}$. Making use of the commutative square \eqref{eq:2-squares} and of Theorem \ref{thm:main22} (with $k$ replaced by the local ring $\cO_{X,x}$ and with $A$ and $B$ replaced by $f^\ast(A)$ and $f^\ast(B)$, respectively), we hence conclude that the images of $[A]$ and $[B]$ under the canonical map \eqref{eq:canonical-1} are different. This finishes the proof of Theorem \ref{thm:five}.
\begin{remark}[Injectivity]
The above proof of Step II shows that whenever the canonical map \eqref{eq:canonical1} (with $k$ replaced by the local ring $\cO_{X,x}$) is injective, the canonical map \eqref{eq:canonical-1} is also injective.
\end{remark}


\begin{thebibliography}{00}

\bibitem{Addington} N.~Addington, {\em The Brauer group is not a derived invariant}. Available at arXiv:1306.6538.

\bibitem{Bertuccioni} I.~Bertuccioni, {\em Brauer groups and cohomology}. Arch. Math. {\bf 84} (2005), no.~5, 406--411. 

\bibitem{BLL} A.~Bondal, M.~Larsen and V.~Lunts, {\em Grothendieck ring of pretriangulated categories}. 
Int. Math. Res. Not. 2004, no. {\bf 29}, 1461--1495. 

\bibitem{Caldararu} A. Caldararu, {\em Derived categories of twisted sheaves on Calabi-Yau manifolds}. Available at \url{http://www.math.wisc.edu/~andreic/publications/ThesisSingleSpaced.pdf.}


\bibitem{Childs} L.~Childs, {\em On Brauer groups of some normal local rings}. Brauer groups (Proc. Conf., Northwestern Univ., Evanston, Ill., 1975), pp. 1--15. LNM {\bf 549}, Springer, Berlin, 1976.


\bibitem{deJong} A. de Jong, {\em A result of Gabber}. Available at Aise Johan de Jong's webpage \url{http://www.math.columbia.edu/dejong/papers/2-gabber.pdf}.

\bibitem{Ford} F.~Demeyer and T.~Ford, {\em On the Brauer group of toric varieties}. Trans. Amer. Math. Soc. {\bf 335} (1993), no. 2, 559--577.

\bibitem{Contemporary} \bysame, {\em Nontrivial, locally trivial Azumaya algebras}. Contemp. Math. {\bf 124} (1992), 39--49.

\bibitem{Gabber} O.~Gabber, {\em Some theorems on Azumaya algebras}. The Brauer group (Sem., Les Plans-sur- Bex, 1980), 129--209.

\bibitem{Gross} M.~Gross and S.~Pavanelli, {\em A Calabi-Yau threefold with Brauer group $(\bbZ/8)^2$}. Proc. Amer. Math. Soc., {\bf 136}(1):1--9, 2008.

\bibitem{Grothendieck} A. Grothendieck, {\em Le groupe de Brauer I: Alg\`ebres d'Azumaya et interpr\'etations diverses}. Dix Expos{\'e}s sur la Cohomologie des Sch\'emas, 46--66. North-Holland, Amsterdam; Paris.

\bibitem{Grothendieck2} \bysame, {\em Le groupe de Brauer II: Th\'eorie cohomologique}. Dix Expos{\'e}s sur la Cohomologie des Sch\'emas, 66--88. North-Holland, Amsterdam; Paris.

\bibitem{Grothendieck3} \bysame, {\em Le groupe de Brauer. III. Exemples et compl\'ements}. Dix Expos{\'e}s sur la Cohomologie des Sch\'emas, 88-188.  North-Holland, Amsterdam; Paris.

\bibitem{ICM-Keller} B.~Keller, {\em On differential graded categories}. International Congress of Mathematicians (Madrid), Vol.~II,  151--190. Eur.~Math.~Soc., Z{\"u}rich (2006).

\bibitem{Miami} M.~Kontsevich, {\em Mixed noncommutative motives}. Talk at the Workshop on Homological Mirror Symmetry,  Miami, 2010. Notes available at \url{www-math.mit.edu/auroux/frg/miami10-notes}.  

\bibitem{finMot} \bysame, {\em Notes on motives in finite characteristic}.  Algebra, arithmetic, and geometry: in honor of Yu. I. Manin. Vol. II,  213--247, Progr. Math., {\bf 270}, BirkhŠuser Boston, MA, 2009. 

\bibitem{IAS} \bysame, {\em Noncommutative motives}. Talk at the IAS on the occasion of the $61^{\mathrm{st}}$ birthday of Pierre Deligne (2005). Available at \url{http://video.ias.edu/Geometry-and-Arithmetic}.    

\bibitem{LO} V.~Lunts and D.~Orlov, {\em Uniqueness of enhancement for triangulated categories}. J. Amer. Math. Soc. {\bf 23} (2010), 853--908.


\bibitem{Mumford} D.~Mumford, {\em The topology of normal singularities of an algebraic surface and a criterion for simplicity}. Inst. Hautes {\'E}tudes Sci. Publ. Math. No. {\bf 9} (1961), 5--22. 

\bibitem{Milne} J. S. Milne, {\em \'Etale cohomology}. Princeton Mathematical Series, vol. {\bf 33}, Princeton University
Press, Princeton, N.J., 1980.

\bibitem{OPS} M.~Ojanguren, R.~Parimala and R.~Sridharan, {\em Ketu and the second invariant of a quadratic space}. $K$-Theory {\bf 7} (1993), 501--515.

\bibitem{Schnell} C.~Schnell, {\em The fundamental group is not a derived invariant}. In Derived categories in algebraic geometry, EMS Ser. Congr. Rep., pages 279--285. Eur. Math. Soc., Z\"urich, 2012.

\bibitem{ANT} G.~Tabuada, {\em A note on secondary $K$-theory}. Algebra and Number Theory {\bf 10} (2016), no. 4, 887--906.

\bibitem{book} \bysame, {\em Noncommutative Motives}. With a preface by Yuri I. Manin. University Lecture Series, {\bf 63}. American Mathematical Society, Providence, RI, 2015.

\bibitem{survey} \bysame, {\em Recent developments on noncommutative motives}. Available at arXiv:1611.05439. 



\bibitem{JIMJ} G.~Tabuada and M.~Van den Bergh, {\em Noncommutative motives of Azumaya algebras}.
Journal of the Institute of Mathematics of Jussieu, {\bf 14} (2015), no. 2, 379--403. 
 
\bibitem{Separable} \bysame, {\em Noncommutative motives of separable algebras}. Advances in Mathematics {\bf 303} (2016), 1122--1161.

\bibitem{Azumaya} B.~To\"en, {\em Derived Azumaya algebras and generators for twisted derived categories}. Invent. Math. {\bf 189} (2012), 581--652.

\bibitem{Toen1} \bysame, {\em Secondary $K$-theory}. Talk at the Workshop on Topological Field Theories, Northwestern University, May 2009. Notes available at David Ben-Zvi's personal webpage \url{https://www.ma.utexas.edu/users/benzvi/GRASP/lectures/NWTFT/toennw.pdf}.



\bibitem{Toen2} B.~To\"en and G.~Vezzosi, {\em Caract{\`e}res de Chern, traces {\'e}quivariantes et g{\'e}om{\'e}trie alg{\'e}brique d{\'e}riv{\'e}e}. Selecta Math. (N.S.) {\bf 21} (2015), no.~2, 449--554.

\bibitem{Pic-Weibel} C.~Weibel, {\em $\mathrm{Pic}$ is a contracted functor}. Invent. Math. {\bf 103} (1991), 351--377.

\end{thebibliography}
\end{document}

\end{proof}